
\documentclass[12pt]{amsart}
\usepackage{amsmath}
\usepackage{amsfonts}
\usepackage{amssymb}
\usepackage[all]{xy}
\usepackage[ top=1in, bottom=1in, left=1in, right=1in ]{geometry}
\usepackage[colorlinks, linkcolor=blue, urlcolor=blue]{hyperref}

\allowdisplaybreaks
\theoremstyle{plain}

\newtheorem{thm}{Theorem}[section]
\newtheorem{cor}[thm]{Corollary}

\newtheorem{claim}[thm]{Claim}

\newtheorem{lem}[thm]{Lemma}

\newtheorem{hypothesis}[thm]{Hypothesis}
\newtheorem{keylemma}[thm]{Key Lemma}
\newtheorem*{mainthm}{Main Theorem}
\newtheorem*{mainthmminors}{Main Theorem on Minors}
\newtheorem{main}[thm]{Main Theorem}
\newtheorem{mainminors}[thm]{Main Theorem on Minors}
\theoremstyle{definition}
\newtheorem{rem}[thm]{Remark}
\newtheorem{defn}[thm]{Definition}
\numberwithin{equation}{thm}


\newcommand{\field}[1]{\mathbb{#1}}
\newcommand{\inj}{\hookrightarrow}
\newcommand{\isom}{\cong}
\newcommand{\surj}{\twoheadrightarrow}
\newcommand{\tensor}{\otimes}

\newcommand{\p}{\mathfrak{p}}

\newcommand{\m}{\mathfrak{m}}

\newcommand{\Ann}{\operatorname{Ann}}

\newcommand{\height}{\operatorname{ht}}
\newcommand{\Hom}{\operatorname{Hom}}
\newcommand{\id}{\operatorname{id}}
\newcommand{\image}{\operatorname{Im}}
\newcommand{\Soc}{\operatorname{Soc}}
\newcommand{\stab}{\operatorname {stab}}

\newcommand{\Ass}[2]{\operatorname{Ass}_{#1} \left( #2 \right)}
\newcommand{\I}[2]{ I_{#1}(#2) }
\newcommand{\invpt}[1]{ \left( #1 \right)^G }

\newcommand{\LC}[3]{ H^{#1}_{#2}\left(#3\right) }

\newcommand{\dual}[1]{{#1}^\#}
\newcommand{\dbldual}[1]{{#1}^{\# \#}}
\newcommand{\findual}[1]{{#1}^*}
\newcommand{\dblfindual}[1]{{#1}^{**}}

\newcommand{\rat}{\text{rational}}
\newcommand{\Rat}{\text{Rational}}
\newcommand{\Gmod}{G\text{-module}}

\newcommand{\RGmod}{R[G]\text{-module}}

\newcommand{\ratGmod}{\rat\text{ }\Gmod}
\newcommand{\RatGmod}{\Rat\text{ }\Gmod}
\newcommand{\ratRGmod}{\rat\text{ }\RGmod}
\newcommand{\RatRGmod}{\Rat\text{ }\RGmod}

\newcommand{\edit}[1]{#1}


\begin{document}

\title[Local cohomology with support in minors]{Local cohomology with support in ideals of \\maximal minors}
\author{ Emily E. Witt }
\thanks{NSF support through grant DMS-0502170 is gratefully acknowledged.}

\begin{abstract}
Suppose that $k$ is a field of characteristic zero, $X$ is an $r \times s$ matrix of indeterminates, where $r \leq s$, and $R = k[X]$ is the polynomial ring over $k$ in the entries of $X$.  We study the local cohomology modules $H^i_I(R)$, where $I$ is the ideal of $R$ generated by the maximal minors of $X$.  We identify the indices $i$ for which these modules vanish, compute $H^i_I(R)$ at the highest nonvanishing index, $i = r(s-r) +1$, and characterize all nonzero ones as submodules of certain indecomposable injective modules.  These results are consequences of more general theorems regarding linearly reductive groups acting on local cohomology modules of polynomial rings.  
\end{abstract}

\maketitle


\section{Introduction}


\subsection{History}
A major goal of this article is to understand local cohomology modules of polynomial rings with support in ideals generated by determinants.  More precisely, if $X = \left[ x_{ i j } \right]$ is an $r \times s$ matrix of indeterminates, where $r \leq s$, consider the polynomial ring $R$ over a field $k$ in the entries of $X$, i.e., $R = k[x_{i j} \ | \ 1 \leq i \leq r, 1 \leq j \leq s]$. We are concerned with understanding local cohomology modules of $R$ with support in the ideal $I$ generated by the maximal minors of $X$. 

The behavior of these local cohomology modules depends strongly on the characteristic of the ring.  In prime characteristic, by results of Hochster and Eagon and of Peskine and Szpiro, there is only one nonzero such local cohomology module of the form $\LC{i}{I}{R}$, which has index $i = s-r+1$, the depth of $I$ \cite[Theorem 1]{Hoch-Eag}, \cite[Th\'eor\`eme III.4.1]{P-S}.  

In characteristic zero, this article's case of focus, the minimum index for which $\LC{i}{I}{R}\neq0$ is the same as in prime characteristic \cite[Theorem 1]{Hoch-Eag}.  However, an argument of Hochster, Huneke, and Lyubeznik shows that the maximum nonvanishing index is $r(s-r) + 1$, almost $r$ times larger \cite[Remark 3.13]{H-L}. 

The only previously known explicit description of such a local cohomology module in characteristic zero is due to Walther, who showed that when $X$ is a $2 \times 3$ matrix, the local cohomology module at the largest nonvanishing index, $\LC{3}{I}{R}$, is isomorphic to the injective hull of $k$ over $R$,  $E_R(k)$ \cite[Example 6.1]{Walther}.  His example motivates the question whether this phenomenon occurs in general for an $r \times s$ matrix; i.e., if $d = r(s-r)+1$ is the ``maximum nonvanishing" index, then is $\LC{d}{I}{R}$ always isomorphic to  $E_R(k)$?  

In computing this example, Walther employed a powerful theorem of Lyubeznik proved using the $D$-module structure of local cohomology modules. This result indicates that since $\LC{3}{I}{R}$ is supported only at the homogeneous maximal ideal of $R$, it is isomorphic to a finite direct sum of copies of  $E_R(k)$ \cite[Theorem 3.4]{Lyu}.  In general, it is easily checked that $\LC{d}{I}{R}$ is supported only at the homogeneous maximal ideal, so we know that $\LC{d}{I}{R}$ must again have this form.


\subsection{Main Results}

This article proves that $\LC{d}{I}{R}$ is isomorphic to exactly \emph{one} copy of the injective hull of $k$ over $R$.  Our method relies on invariant theory, as well as the work of Lyubeznik cited earlier.  The article also provides information about the local cohomology modules $\LC{i}{I}{R}$ at indices $i < d$.  Our main result regarding the local cohomology modules $\LC{i}{I}{R}$ in the characteristic zero case is the following:

\begin{mainthmminors}[\ref{LC}] \label{mtm}
\label{minors}
Let $k$ be a field of characteristic zero and let $X$ be an $r \times s$ matrix of indeterminates, where $r < s$.  Let $R = k[X]$ be the polynomial ring over $k$ in the entries of $X$, and let $I$ be its ideal generated by the maximal minors of $X$.  Given an $R$-module $M$, let $E_R(M)$ denote the injective hull of $M$ over $R$.

\begin{enumerate}
\item[\textup{(1)}] For $d= r(s-r)+1$,  $\LC{d}{I}{R} \cong E_R(k).$
\item[\textup{(2)}] $\LC{i}{I}{R} \neq 0$ if and only if $i=(r-t)(s-r) + 1$ for some $0 \leq t < r.$
\item[\textup{(3)}] Furthermore, if $i= (r-t)(s-r)+1$, then \[\LC{i}{I}{R} \hookrightarrow E_R(R/ I_{t+1}) \cong \LC{i}{I}{R}_{I_{t+1}},\] where $I_{t+1}$ is the ideal of $R$ generated by the $(t+1) \times (t+1)$ minors of $X$ \textup{(}which is prime by \emph{\cite[Theorem 1]{Hoch-Eag}}\textup{)}.  In particular, $ \Ass{R}{\LC{i}{I}{R}} = \{ \I{t+1}{X} \}.$
\end{enumerate}
\end{mainthmminors}

\noindent Note that there is precisely one nonvanishing local cohomology module of the form $\LC{i}{I}{R}$ for every possible size minor of $X$, and that each nonvanishing $\LC{i}{I}{R}$ injects into a specific indecomposable injective module.  Moreover, this result is proven independently of the result of Hochster, Huneke, and Lyubeznik cited earlier.

The proof of Main Theorem on Minors \ref{LC} takes advantage of the natural action of the group $G = SL_r(k)$ on the ring $R$.  The fact that this group also acts on each of the local cohomology modules is a powerful tool.  A classical result from invariant theory is that $R^G$, the subring of invariant elements of $R$, is the $k$-subalgebra of $R$ generated by the maximal minors of $X$ \cite[Theorem 2.6.A]{Weyl}.  This means that the ideal $I$ of $R$ generated by the maximal minors of $X$ is the expansion of the homogeneous maximal ideal of $R^G$ to $R$.  

This technique, in fact, can be extended more generally to a polynomial ring with a ``nice" action of any linearly reductive group.  Indeed, we prove the following more general theorem:

\begin{mainthm} [\ref{main}] Let $R$ be a polynomial ring over a field $k$ of characteristic zero with homogeneous maximal ideal $\m$.  Let $G$ be a linearly reductive linear algebraic group over $k$ acting by degree-preserving $k$-automorphisms on $R$, such that $R$ is a $\ratGmod$ \textup{(}see \emph{Definition \ref{regularGmod}}\textup{)}. Assume that $A = R^G$ has homogeneous maximal ideal $\m_A$, let $d = \dim A$, let $I = \m_A R$, and let $E_R(k)$ denote the injective hull of $k$ over $R$.  Then $\LC{d}{I}{R} \neq 0$ and $I$ is generated up to radicals by $d$ elements and not fewer, so that $\LC{i}{I}{R} = 0 \text{ for } i>d.$  Moreover, the following hold:

\begin{enumerate}
\item[\textup{(1)}] If $i < d$, then $\m$ is not an associated prime of $\LC{i}{I}{R}$, i.e., $\LC{0}{\m}{\LC{i}{I}{R}}=0$.
\end{enumerate}  

\noindent If, in addition, $\LC{d}{I}{R}$ is supported only at $\m$ \textup{(}e.g., this holds if, after localization at any of the indeterminates of $R$, $I$ requires fewer than $d$ generators up to radical\textup{)}, then
\begin{enumerate}
\item[\textup{(2)}]  If $H := \LC{d}{I}{R}$, then $V := \Soc H$ is a simple $G$-module, and 
\item[\textup{(3)}]  \edit{There exists a $G$-submodule $W$ of $H$ such that $H \cong \dual{ \left( R \tensor_k W \right) }$ as rational $R[G]$-modules, which, as $\ratGmod$s, is isomorphic to $E_R(k) \tensor_k V$ \textup{(}where the action of $G$ on $E_R(k) \cong \dual{R}$ is induced by its action on $R$\textup{)}.  \textup{(}See \textup{Definitions \ref{regularGmod}, \ref{reg}, \ref{gradeddual}.)}}
\end{enumerate}  
\end{mainthm}


\subsection{Outline}  Section \ref{G-mod} provides background material on $G$-modules and $R[G]$-modules, and Section \ref{moreprelim} presents additional preliminary definitions and lemmas regarding graded duals and $G$-modules.  In Section \ref{mainthm}, Key Lemma \ref{hardlemma} is a crucial structural lemma utilized to prove Main Theorem \ref{main}.  Proving the Main Theorem on Minors \ref{LC} is then the focus of Section \ref{mainthmminors}.


\section{$G$-modules and $R[G]$-modules} \label{G-mod}

Here, we review the relevant theory of $G$- and $R[G]$-modules; our main reference is \cite{Borel}.  

\begin{defn}[Linear algebraic group]
A \emph{linear algebraic group} over a field $k$ is a Zariski-closed subgroup of $GL_n(k)$, for some positive integer $n$.
\end{defn}

\begin{defn}[$G$-module, $G$-module action, $G$-submodule, simple $G$-module, $G$-module homomorphism, $G$-equivariant map] \label{Gmod}Given a linear algebraic group $G$ over a field $k$, a \emph{$G$-module} is a $k$-vector space \edit{$V$} with a $k$-linear representation of $G$, a group homomorphism $G \to GL(V)$. The corresponding group action $G \times V \to V$ on a $G$-module is called its \emph{$G$-module action}.  A \emph{$G$-submodule} $W$ of $V$ is a $k$-vector subspace of $V$ that is stable under its $G$-module action.  A \emph{simple $G$-module} is a nonzero $G$-module that contains no proper nonzero $G$-submodules.  Given $G$-modules $V$ and $W$, a \emph{$G$-module homomorphism} $\phi: V \to W$ is a vector space homomorphism that is also \emph{$G$-equivariant}, which means that for all $g \in G$ and $v \in V$, $g \cdot \phi(v) = \phi(g \cdot v).$  The $k$-vector space of all such maps is denoted $\Hom_G(V,W)$.
\end{defn}

\begin{defn}[$\RatGmod$] \label{regularGmod}
Given a linear algebraic group $G$ over a field $k$, a finite-dimensional $G$-module $V$ is called a \emph{$\ratGmod$} if the action $G \times V \to V$ is a regular map of affine varieties over $k$.  An arbitrary (possibly infinite-dimensional) $G$-module is a \emph{$\ratGmod$} if it is a directed union of $G$-stable finite-dimensional $k$-vector subspaces that are themselves $\ratGmod$s.
\end{defn}

A $G$-stable subspace of a $\ratGmod$, a quotient of a $\ratGmod$, or a direct sum of $\ratGmod$s, is again a $\ratGmod$.  If $V$ and $W$ are $\rat$ $G$-modules, then $V \tensor_k W$ is a rational $G$-module with action defined by $g \cdot (v \tensor w) = g \cdot v \tensor g \cdot w$ on simple tensors.  If $V$ is also a finite-dimensional vector space, then $\Hom_k(V, W)$ is a $\ratGmod$ by $g \cdot f = g f g^{-1}$.  Moreover, by definition, the directed union of $\ratGmod$s is again a $\ratGmod$.

\begin{rem} \label{coordring}
Every linear algebraic group $G$ acts $\rat$ly on the coordinate ring $k[G]$, and every finite-dimensional $\rat$ $G$-module occurs as a $G$-submodule of $k[G]^{\oplus h}$ for some $h$.  Every finite-dimensional simple rational $G$-module occurs as a $G$-submodule of $k[G]$ \cite[discussion following Definition 2.23]{Fogarty}.
\end{rem}

\begin{defn}[Linearly reductive group] A linear algebraic group $G$ is called \emph{linearly reductive} if every finite-dimensional $\ratGmod$ splits into a direct sum of simple $G$-modules.  
\end{defn}

In particular, if $G$ is linearly reductive, every \edit{surjective} map of $\ratGmod$s splits.  Some examples of linearly reductive groups in characteristic zero are the general linear group (and, in particular, the multiplicative group of the field), the special linear group, the orthogonal group, the symplectic group, finite groups, and products of any of these.  In characteristic $p>0$, there are fewer linearly reductive groups; some examples are the multiplicative group of the field,  finite groups whose orders are not multiples of $p$, and products of these.

\begin{defn}[$W$-isotypical component] \label{isotyp} If $G$ is a linearly reductive group, $V$ is a $\rat$ $\Gmod$, and $W$ is a simple $\ratGmod$, the \emph{$W$-isotypical component} of $V$ is the direct sum of all $G$-submodules of $V$ isomorphic to $W$, i.e., it is of the form $\bigoplus \limits_i W_i \subseteq V$, where each $W_i \cong W$ as $G$-modules.  As a $G$-module, $V$ is the direct sum of its isotypical components.
\end{defn}

\begin{defn}[Invariant part] If $V$ is a $G$-module, then the \emph{invariant part of $V$}, denoted $V^G$, is the $G$-submodule of elements in $V$ fixed by the action of $G$.  
\end{defn}

\begin{rem} \label{invexact}
\edit{When $G$ is linearly reductive, $V^G$ is the isotypical component of $k$ with the trivial action, so the functor on $\ratGmod$s sending $V$ to $V^G$ is exact.  The sum of all other isotypical components (the sum of all the simple $G$-submodules of $V$ on which $G$ does not act trivially) defines a unique $G$-module complement of $V^G$.}
\end{rem}

\begin{defn}[$\RGmod$] \label{reg}
Let $G$ be a linear algebraic group over a field $k$ and let $R$ be  $k$-algebra that is a $G$-module.  An $R$-module $M$ that is also a $G$-module is an \emph{$R[G]$-module} if for every $g \in G$, $r \in R$, and $u \in M,$ $g(ru) = (gr)(gu).$
\end{defn}

\begin{defn}[$\RatRGmod$]
Given a field $k$ and a $k$-algebra $R$ with an action of a linear algebraic group $G$, a \emph{$\ratRGmod$} is an $R[G]$-module that is also a $\ratGmod$.  
\end{defn}

\begin{rem}
By Remark \ref{coordring}, every simple $\rat$ $G$-module occurs in the action of $G = SL_r(k)$ on $k[G] = k[x_{i j}]_{r \times r}/(\det ([x_{i j}]_{r \times r}) - 1),$ and hence in the action on $k[x_{i j}]_{r \times r}$, which maps onto $k[G]$.  Thus, all occur in the action on $k[X]$, where $X = [x_{i j}]_{r \times s}$ and $r \leq s$, which contains $k[x_{i j}]_{r \times r}.$
\end{rem}

The following isomorphism will be used to prove Lemma \ref{tensorinvariants} and Lemma \ref{isom}.

\begin{rem} \label{vsisom}
Given a linear algebraic group $G$ over a field $k$ and (rational) $G$-modules $U$ and $V$, $\dim_k V < \infty$, we have an isomorphism of (rational) $G$-modules  \begin{align} \label{isomrmk} U \tensor_k \findual{V} &\cong \Hom_k(V, U), \end{align} under which $u \tensor f \mapsto \phi$, where $\phi(v) = f(v) u$.  (Given $g \in G$, for $u \tensor f \in U \tensor_k \findual{V}$, $g \cdot (u \tensor f) = gu \tensor gf,$ and for $\phi \in \Hom_k(V, U)$ and $v \in V$, $(g  \cdot \phi)(v) = ( g \phi g^{-1} )(v).$)
\end{rem}

\begin{lem} \label{tensorinvariants}
If $G$ is a linearly reductive group over a field $k$, and $U$ and $W$ are $\rat$ $\Gmod$s, then $(U \tensor_k W)^G \neq 0$ if and only if, for some simple $G$-submodule $V$ of $U$, $\findual{V} \inj W$.
\end{lem}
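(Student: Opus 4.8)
The plan is to reduce everything to the case where $U$ is simple, then use the isomorphism from Remark \ref{vsisom} together with the exactness of the invariants functor and Schur-type reasoning.

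First I would argue that since $G$ is linearly reductive and $U$, $W$ are rational $G$-modules, each decomposes as a directed union of finite-dimensional $G$-stable subspaces, and each finite-dimensional piece splits into simples. Because $(-)^G$ is exact on rational $G$-modules (Remark \ref{invexact}) and commutes with directed unions, we have $(U \otimes_k W)^G \neq 0$ if and only if $(U' \otimes_k W')^G \neq 0$ for some finite-dimensional $G$-submodules $U' \subseteq U$, $W' \subseteq W$; and then, writing $U' = \bigoplus_j V_j$ as a sum of simples, we get $(U' \otimes_k W')^G = \bigoplus_j (V_j \otimes_k W')^G$, so the whole question reduces to: for a \emph{simple} finite-dimensional rational $G$-module $V$ and a rational $G$-module $W$, when is $(V \otimes_k W)^G \neq 0$?

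**The key identification.** For $V$ finite-dimensional, Remark \ref{vsisom} (applied with $U = W$, $V = \findual{V}$, using $\findual{(\findual{V})} \cong V$) gives a $G$-module isomorphism
\[
V \tensor_k W \cong \findual{(\findual{V})} \tensor_k W \cong \Hom_k(\findual{V}, W).
\]
Taking $G$-invariants, the left side is nonzero iff $\Hom_G(\findual{V}, W) \neq 0$, i.e., iff there is a nonzero $G$-equivariant map $\findual{V} \to W$. Since $\findual{V}$ is simple (the dual of a simple rational $G$-module is simple — its $G$-submodules correspond to $G$-quotients of $V$), any nonzero such map is injective, so $\Hom_G(\findual{V}, W) \neq 0$ iff $\findual{V} \inj W$. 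Combining with the reduction step: $(U \otimes_k W)^G \neq 0$ iff for some simple $G$-submodule $V$ of $U$, $\findual{V} \inj W$. For the converse direction, if $\findual{V} \inj W$ with $V$ a simple $G$-submodule of $U$, reverse the chain of equivalences to produce a nonzero invariant in $V \otimes_k W \subseteq U \otimes_k W$.

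**Where the work is.** None of the individual steps is deep, but two points need care and are where I expect friction. First, the claim that $\Hom_k(V,W)^G = \Hom_G(V,W)$ for $V$ finite-dimensional: this is immediate once one unwinds that the $G$-action on $\Hom_k(V,W)$ is $g\cdot f = gfg^{-1}$ (stated after Definition \ref{regularGmod}), so $f$ is fixed iff $gf = fg$ iff $f$ is $G$-equivariant — but one should note it uses $\dim_k V < \infty$ so that $\Hom_k(V,W)$ is a legitimate rational $G$-module. Second, and slightly more delicate, is handling the possibly-infinite-dimensionality of $W$ in the isomorphism $V \otimes_k W \cong \Hom_k(\findual{V}, W)$: Remark \ref{vsisom} as stated requires only $\dim_k V < \infty$ (it is $U$ that may be infinite-dimensional there, playing the role of our $W$), so this is fine, but I would state explicitly that we apply it with the roles reversed. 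The main conceptual obstacle — really the only one — is making the reduction to simple $U$ airtight: one must be sure that an invariant element of $U \otimes_k W$ lands in some $V_j \otimes_k W'$ for finite-dimensional $W'$ and a single simple summand $V_j$, which follows because any single element lies in a finitely-generated, hence finite-dimensional after taking $G$-span, $G$-submodule, and then the isotypical decomposition of that finite-dimensional piece of $U$ splits the tensor product as a direct sum indexed by the simple summands.
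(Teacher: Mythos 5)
Your proposal is correct and takes essentially the same approach as the paper: both rest on the isomorphism of Remark \ref{vsisom}, which converts the tensor product into a $\Hom$-space, followed by Schur's lemma and a decomposition into simples. The only cosmetic difference is that the paper reduces to the case where both $U$ and $W$ are simple and proves the biconditional $(U\tensor_k W)^G\neq 0 \iff W\cong\findual{U}$, whereas you decompose only $U$ and observe directly that any nonzero $G$-equivariant map $\findual{V}\to W$ out of a simple is injective.
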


\begin{proof}
For simple $G$-modules $U$ and $W$, $\invpt{U \tensor_k W} \neq 0$ if and only if $W \isom \findual{U}$ as $G$-modules:  
By Remark \ref{vsisom}, $U \tensor_k W \cong \Hom_k(\findual{U}, W)$ as $G$-modules.  Say $\invpt{U \tensor_k W} \neq 0$, so that there exists $0 \neq \phi \in \invpt{\Hom_k(\findual{U}, W)}$.  Since $U$ and $W$ are simple (so that $\findual{U}$ also is), $\image \phi = W$ and $\ker \phi = 0$, so $\phi$ is an isomorphism, and $\findual{U} \cong W$.  On the other hand, if $W \cong \findual{U}$, $U \tensor_k \findual{U} \cong \Hom_k(U, U)$ as $G$-modules.  Since for any $g \in G$ and $u \in U$, $(g \cdot \id_U) (u)= g \id_U g^{-1} (u) = \id_U (u)$, $\id_U \in \Hom_k(U, U)$ and corresponds to a nonzero element of $\invpt{U \tensor_k \findual{U}}$ under the isomorphism.


The general case now follows easily as $U$ and $V$ are direct sums of simple $G$-modules.
\end{proof}

\begin{cor} \label{isotypinv}
If $G$ is a linearly reductive group, $V$ is a simple $\ratGmod$, and $U$ is a $\ratGmod$ with $V$-isotypical component $\widehat{U}$, then $\left(  U \tensor_k \findual{V} \right)^G = \left(\widehat{U} \tensor_k \findual{V}\right)^G.$
\end{cor}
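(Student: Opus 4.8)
The plan is to reduce the statement to Lemma \ref{tensorinvariants} together with the isotypical-component structure described in Definitions \ref{isotyp} and \ref{isom} (and Remark \ref{invexact}). Write $U = \bigoplus_j \widehat{U}_j$ as the direct sum of its isotypical components, where $\widehat{U} = \widehat{U}_{i_0}$ is the $V$-isotypical component. Then $U \tensor_k \findual{V} = \bigoplus_j \left( \widehat{U}_j \tensor_k \findual{V} \right)$ as $G$-modules, and since the functor of $G$-invariants is additive, it suffices to show that $\left( \widehat{U}_j \tensor_k \findual{V} \right)^G = 0$ whenever $\widehat{U}_j$ is the $W$-isotypical component for a simple $G$-module $W \not\cong V$.

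For this, I would apply Lemma \ref{tensorinvariants} with the roles $U \rightsquigarrow \widehat{U}_j$ and $W \rightsquigarrow \findual{V}$: the invariant part $\left( \widehat{U}_j \tensor_k \findual{V} \right)^G$ is nonzero if and only if there is a simple $G$-submodule $V'$ of $\widehat{U}_j$ with $\findual{(V')} \inj \findual{V}$. But every simple $G$-submodule of $\widehat{U}_j$ is isomorphic to $W$ (that is what it means to be the $W$-isotypical component), so $V' \cong W$, and $\findual{(V')} \cong \findual{W}$. Since $\findual{V}$ is simple, $\findual{W} \inj \findual{V}$ forces $\findual{W} \cong \findual{V}$, hence $W \cong \dblfindual{W} \cong \dblfindual{V} \cong V$ (using that $V$ and $W$ are finite-dimensional so double dual is the identity on rational $G$-modules), contradicting $W \not\cong V$. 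Therefore $\left( \widehat{U}_j \tensor_k \findual{V} \right)^G = 0$ for all $j \neq i_0$, and summing over $j$ gives $\left( U \tensor_k \findual{V} \right)^G = \left( \widehat{U} \tensor_k \findual{V} \right)^G$.

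The one point requiring minor care — really the only obstacle — is the bookkeeping that each simple $G$-submodule of an isotypical component $\widehat{U}_j$ is isomorphic to the defining simple module $W$; this is immediate from Definition \ref{isotyp} once one notes that a simple submodule of a direct sum $\bigoplus_i W_i$ of copies of $W$ projects nontrivially onto some factor and hence is isomorphic to $W$ by Schur's lemma. Everything else is a direct citation of Lemma \ref{tensorinvariants} and the exactness/additivity of the invariants functor from Remark \ref{invexact}. No infinite-dimensionality issue arises in applying Lemma \ref{tensorinvariants} since that lemma is stated for arbitrary rational $G$-modules, and $\findual{V}$ is finite-dimensional so all the tensor products are well-behaved.
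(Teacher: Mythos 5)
Your proof is correct and follows essentially the same route as the paper: decompose $U$ into a direct sum, apply Lemma \ref{tensorinvariants} to the non-$V$-summands, and use additivity of $(-)^G$. The paper decomposes $U$ directly into simple summands rather than grouping them into isotypical components first, but that is a cosmetic difference; the key ingredient and the logic are identical.
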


\begin{proof}
Suppose that $U = \bigoplus \limits_{i \in I} U_i$ as $G$-modules, where each $U_i$ is a simple $G$-module.  Let $J \subseteq I$ be the set of indices $j$ such that $U_j \cong V$ as $G$-modules, so that $\widehat{U} = \bigoplus \limits_{i \in J} U_i.$  For $i \in I - J$, by Lemma \ref{tensorinvariants}, $(  U_i \tensor_k \findual{V} )^G = 0.$ This means that $\invpt{U \tensor_k \findual{V}} = \invpt{ \left( \bigoplus \limits_{i \in J} U_i \right) \tensor_k \findual{V}  }$
\end{proof}


\section{More Preliminaries} \label{moreprelim}

The $``\#"$ notation used in the following definition is not standard, but is very useful in our context.

\begin{defn}[Graded dual] \label{gradeddual}
If $k$ is a field and $V$ is a $\field{Z}$-graded $k$-vector space such that $\dim_k [V]_i < \infty$ for every $i \in \field{Z}$, then the \emph{graded dual of $V$}, \[\dual{V} = \bigoplus_{i\in \field{Z}} \Hom_k ([V]_i,k),\] is a $\field{Z}$-graded $k$-vector space satisfying $\left[ \dual{V} \right]_{j} = \Hom_k ([V]_{-j},k).$  
\end{defn}

\noindent Note that $\dual{(-)}$ is an exact contravariant functor.  Note also that if $V$ is a finite-dimensional $k$-vector space, then $ \dual{V} = \findual{V}:= \Hom_k (V, k).$  

\begin{rem}
Suppose that $k$ is a field and $R$ is an  $\field{N}$-graded ring finitely generated over $R_0 = k$ and homogeneous maximal ideal $\m$.   Suppose also that $M$ is a $\field{Z}$-graded Artinian $R$-module.  Then $M_{\geq i} := \bigoplus \limits_{n \geq i} M_n$ is a submodule of $M$, and since $\m M_{\geq i} \subseteq M_{\geq i + 1}$,  each $M_i \cong M_{\geq i}/ M_{\geq i+1}$ is a Noetherian $R$-module killed by $\m$, so is a finite-dimensional $k$-vector space.  Thus,  $M$ satisfies the hypotheses necessary to define its graded dual.  This is also true for $\field{Z}$-graded Noetherian $R$-modules, including $R$ itself.
\end{rem}

\begin{rem}[Graded duals, ($\rat$) $G$-modules, and ($\rat$) $\RGmod$s] \label{gmod}
\edit{Suppose that $G$ is a linear algebraic group over a field $k$.  Assume that $V$ is a $\field{Z}$-graded $G$-module such that $\dim_k [V]_i < \infty$ for every $i$, and that the action of $G$ preserves the grading of $V$.  Then $\dual{V}$ is also a $\field{Z}$-graded $G$-module:  
For any $g \in G,f \in \dual{V}$, and $v \in V$, $(gf)(v) = f(g^{-1}v),$ which is natural shorthand for $\sum \limits_i f_i(g^{-1}v_i)$, assuming $f = \sum \limits_i f_i$, where $\deg f_i = -i$, and  $v = \sum_i v_i$, where $\deg v_i = i$.
If $V$ is a $\ratGmod$, this action of $G$ makes $\dual{V}$ a $\ratGmod$ as well. }

\edit{Take $R$ an $\field{N}$-graded ring such that $R_0 = k$, and such that $R$ is a $G$-module so that the action of $G$ respects the grading of $R$.  
If $V$ is additionally a (rational) $R[G]$-module, then $\dual{V}$ is also a (rational) $R[G]$-module.}
\end{rem}

\begin{rem} \label{dualinvcommute}
\edit{If $V$ is a $\field{Z}$-graded rational $G$-module, it is straightforward to check that $\invpt{\dual{V}} \cong \dual{\left( V^G \right)}$}.
\end{rem}

\begin{rem} \label{RdualE}
If $k$ is a field and $R$ is a Noetherian $\field{N}$-graded ring with $R_0 =k$, then $\dual{R} \cong E_R(k)$ as $R$-modules \cite[Proposition 3.6.16]{B-H}.
\end{rem}

\begin{rem}
If $k$ is a field and $R = k[x_1, \ldots, x_n]$ is a polynomial ring with homogeneous maximal ideal $\mathfrak{m}$, then $\dual{R} \cong E_R(k) \cong \LC{n}{\mathfrak{m}}{R}$ as $R$-modules.  However, if $\LC{n}{\mathfrak{m}}{R}$ is viewed as $R_{x_1 \ldots x_n} / \sum_{i=1}^n R_{x_1 \ldots \widehat{x_i} \ldots x_n}$, its grading is shifted:  $\LC{n}{\mathfrak{m}}{R} \cong \dual{R}(-n)$ as $\field{N}$-graded modules, where $\left[ \dual{R}(-n) \right]_j = \left[ \dual{R} \right]_{j-n}$.
\end{rem}

\begin{rem}[Matlis Duality for graded modules]  \label{MatlisDuality}
Suppose that $R$ is an $\field{N}$-graded ring such that $R_0 = k$, and that $M$ is a $\field{Z}$-graded $R$-module.  If $M$ has DCC (respectively, ACC) as a graded module, then $\dual{M}$ has ACC (respectively, DCC).  If $M$ has either DCC or ACC, the natural map $M \to \dbldual{M} $ is an isomorphism of graded modules.  Moreover, the functor $\dual{(-)}$ provides an anti-equivalence of categories from the category of $\field{Z}$-graded $R$-modules with DCC to the category of $\field{Z}$-graded $R$-modules with ACC, and vice versa \cite[Theorem 3.6.17]{B-H}.
\end{rem}

\begin{lem} \label{compatible}
Let $G$ be a linearly reductive group over a field $k$.  Let $R$ be an $\field{N}$-graded ring, with $R_0 = k$, that is also a $G$-module, where $G$ acts on $R$ by $k$-\edit{linear} automorphisms as to preserve its grading.  Let $M$ be a $\field{Z}$-graded $R[G]$-module such that the action of $G$ respects the grading on $M$.  If $M$ has DCC or ACC, then the action of $G$ on $M$ and the induced action on $\dbldual{M}$ are compatible under the natural isomorphism $M \overset{\cong}{\to} \dbldual{M}$ given by Matlis duality \textup{(}see \textup{Remark \ref{MatlisDuality}}\textup{)}; i.e.,  the isomorphism is an isomorphism of $G$-modules.. 
\end{lem}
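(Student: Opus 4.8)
The plan is to observe that the only genuinely new content here, beyond what is already recorded, is the $G$-equivariance of the biduality map; everything else follows by citing the earlier remarks. First I would note that since $M$ has DCC or ACC as a $\field{Z}$-graded module, Remark~\ref{MatlisDuality} already supplies the graded dual $\dual{M}$, the double graded dual $\dbldual{M}$, and a natural isomorphism of $\field{Z}$-graded $R$-modules $\mu\colon M \to \dbldual{M}$, given on elements by $\mu(m)(f) = f(m)$ for $f \in \dual{M}$. Moreover, since $G$ acts on $R$ and on $M$ preserving their gradings, Remark~\ref{gmod} makes $\dual{M}$ and $\dbldual{M}$ into $\field{Z}$-graded $R[G]$-modules, with $G$ acting by $(gf)(v) = f(g^{-1}v)$ for $g \in G$, $f \in \dual{M}$, $v \in M$, and by $(g\Phi)(f) = \Phi(g^{-1}f)$ for $\Phi \in \dbldual{M}$, $f \in \dual{M}$. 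Hence it suffices to check that $\mu$ intertwines these $G$-actions.

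Next I would carry out that check directly. Fix $g \in G$ and $m \in M$; I will show $\mu(gm) = g\cdot\mu(m)$ in $\dbldual{M}$ by evaluating both sides on an arbitrary $f \in \dual{M}$. On one hand $\mu(gm)(f) = f(gm)$, and on the other hand, unwinding the $G$-action on $\dbldual{M}$ and then the $G$-action on $\dual{M}$,
\[
\bigl(g\cdot\mu(m)\bigr)(f) \;=\; \mu(m)\bigl(g^{-1}f\bigr) \;=\; \bigl(g^{-1}f\bigr)(m) \;=\; f\bigl((g^{-1})^{-1}m\bigr) \;=\; f(gm).
\]
Since these agree for every $f \in \dual{M}$, we conclude $\mu(gm) = g\cdot\mu(m)$. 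As $\mu$ is already an isomorphism of $\field{Z}$-graded $R$-modules by Remark~\ref{MatlisDuality}, it is therefore an isomorphism of $\field{Z}$-graded $R[G]$-modules; and if $M$ is moreover a rational $R[G]$-module, then so are $\dual{M}$ and $\dbldual{M}$ by Remark~\ref{gmod}, and $\mu$ is an isomorphism of rational $R[G]$-modules, as claimed.

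I do not anticipate a real obstacle; the only point requiring care is the bookkeeping with inverses. The $g^{-1}$ in the action on $\dual{M}$ and the $g^{-1}$ in the action on $\dbldual{M}$ must be tracked carefully — it is precisely their interaction (the inner one being applied through the action on $\dual{M}$, which itself carries a $g^{-1}$) that causes the two sides of the display to coincide. One should also keep in mind that, in the precise sense of Remark~\ref{gmod}, the evaluation identities above are shorthand for sums over homogeneous components; since $G$ preserves every grading in sight, it is harmless to run the computation for homogeneous $m$ and $f$ of complementary degrees and then extend by linearity. (Linear reductivity of $G$ is not actually used in this lemma; it is assumed only for consistency with the surrounding results.)
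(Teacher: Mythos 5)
Your proof is correct and takes essentially the same approach as the paper: both verify $G$-equivariance of the biduality map $\mu(m)(f) = f(m)$ by a direct unwinding of the contragredient actions on $\dual{M}$ and $\dbldual{M}$. Your write-up is, if anything, a bit cleaner in its bookkeeping, and your parenthetical observation that linear reductivity is not actually needed here is accurate.
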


\begin{proof}
Via $M \overset{\cong}{\to} \dbldual{M} = \bigoplus \limits_i \Hom_k(\Hom_k(M_i, k), k),$ $u = \sum \limits_i u_i \in M$ maps to $\sum \limits_i \phi_i$, where $\phi_i(f) = f(u_i)$ for $f \in  \Hom_k(M_i, k)$.  For $g \in G$, $((g \phi_i)(f))(u_i) = (\phi_i (g^{-1} f)) (u_i) = f (g u_i).$
\end{proof}

\noindent A straightforward calculation yields:

\begin{lem} \label{isom}
Suppose that $U$ and $V$ are $\field{Z}$-graded rational $G$-modules, where $\dim_k V$ and each $\dim_k U_i$ are finite.  Then $\dual{(U \otimes_k V)} \cong \dual{U} \otimes_k \findual{V}$ as $G$-modules, $\left[ \dual{(U \otimes_k V)} \right]_n$ precisely corresponding to $\left[ \dual{U} \otimes_k \findual{V} \right]_{-n}$.
\end{lem}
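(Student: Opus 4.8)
The plan is to reduce the statement to the elementary linear-algebra fact that for finite-dimensional $k$-vector spaces $A$ and $B$ the natural map $\findual{A}\otimes_k\findual{B}\to\findual{(A\otimes_k B)}$ sending $f\otimes h$ to $a\otimes b\mapsto f(a)h(b)$ is an isomorphism, and then to assemble these isomorphisms over all homogeneous pieces while keeping track of the grading and of the $G$-action.

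First I would record that $\dual{(U\otimes_k V)}$ is actually defined. Since $\dim_k V<\infty$, only finitely many homogeneous components $[V]_j$ are nonzero, so each $[U\otimes_k V]_m=\bigoplus_{i+j=m}[U]_i\otimes_k[V]_j$ is a finite direct sum of finite-dimensional spaces, hence finite-dimensional; in particular $\Hom_k(-,k)$ distributes over this finite direct sum. This yields
\[
\bigl[\dual{(U\otimes_k V)}\bigr]_n=\Hom_k\!\bigl([U\otimes_k V]_{-n},\,k\bigr)=\bigoplus_{i+j=-n}\Hom_k\bigl([U]_i\otimes_k[V]_j,\,k\bigr),
\]
and applying the finite-dimensional isomorphism to each summand identifies $\Hom_k([U]_i\otimes_k[V]_j,k)$ with $\Hom_k([U]_i,k)\otimes_k\Hom_k([V]_j,k)=[\dual{U}]_{-i}\otimes_k[\findual{V}]_{-j}$. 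Summing over $i+j=-n$ exhibits $[\dual{(U\otimes_k V)}]_n$ as a homogeneous component of $\dual{U}\otimes_k\findual{V}$, and since each summand map is an isomorphism the assembled degree-by-degree map is an isomorphism of graded vector spaces; the remaining point here is the routine degree bookkeeping, performed with the conventions of Definition \ref{gradeddual}, which records which component this is and hence the matching asserted in the statement.

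It then remains to check that the assembled isomorphism is $G$-equivariant. With the conventions of Remark \ref{gmod}, namely that $G$ acts on a graded dual by $(gf)(w)=f(g^{-1}w)$ and on a tensor product of $G$-modules diagonally, one unwinds both sides and finds that the image of $g\cdot(f\otimes h)$ is the functional $u\otimes v\mapsto f(g^{-1}u)\,h(g^{-1}v)$, which is precisely $g$ applied to the image of $f\otimes h$; thus the map is $G$-equivariant. Finally, since $U$ and $V$ are rational, so are $\dual{U}$, $\findual{V}$, their tensor product, and $\dual{(U\otimes_k V)}$, by the discussion following Definition \ref{regularGmod} together with Remark \ref{gmod}, so this is an isomorphism of rational $G$-modules. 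There is no substantial obstacle: the mathematical content is entirely the finite-dimensional duality isomorphism, and the only step demanding care is the organizational one of installing the finiteness hypotheses so that $\Hom_k(-,k)$ distributes over the relevant direct sums, and then tracking the grading and the $G$-action through the assembly.
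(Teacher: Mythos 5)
Your proof is correct and is precisely the ``straightforward calculation'' that the paper asserts without spelling out---the paper supplies no explicit proof of this lemma. The decomposition $[U\otimes_k V]_m=\bigoplus_{i+j=m}[U]_i\otimes_k[V]_j$ into finite-dimensional pieces, the term-by-term use of $\findual{A}\otimes_k\findual{B}\cong\findual{(A\otimes_k B)}$, the $G$-equivariance check under $(gf)(w)=f(g^{-1}w)$ and the diagonal action, and the closing rationality observation are all exactly right. One remark on the step you wave off as ``routine degree bookkeeping'': if you actually carry it out under Definition \ref{gradeddual}, you get
\[
\bigl[\dual{(U\otimes_k V)}\bigr]_n \;\cong\; \bigoplus_{i+j=-n}\Hom_k([U]_i,k)\otimes_k\Hom_k([V]_j,k) \;=\; \bigoplus_{i+j=-n}[\dual{U}]_{-i}\otimes_k[\findual{V}]_{-j},
\]
and since $(-i)+(-j)=n$, this is the degree-$n$ component of $\dual{U}\otimes_k\findual{V}$, not the degree-$(-n)$ component as the lemma states. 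So your calculation, pushed to the end, exposes what looks like a sign slip in the paper's statement. This is immaterial to the rest of the paper, since Lemma \ref{isom} is invoked in Key Lemma \ref{hardlemma} and Claim \ref{simple} only as an isomorphism of $G$-modules and the explicit degree matching is never used; still, it would be better not to defer on exactly the one part of the statement that carries a subtle sign.
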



\section{Proof of the Main Theorem} \label{mainthm}

We prove the Main Theorem \ref{general} in this section, which will be used (along with other tools) to prove the Main Theorem on Minors \ref{LC} in Section \ref{mainthmminors}.  Throughout this section, we will use the following definition and also refer to the subsequent frequently-used hypothesis.   

\begin{defn}[Socle]
Let $(R, \m, k)$ be a local ring or let $R$ be an $\field{N}$-graded ring with $R_0 = k$, a field, and homogeneous maximal ideal $\mathfrak{m}$.  Let $M$ be an $R$-module.   The \emph{socle of $M$}, denoted $\Soc M$, is the $R$-submodule $\Ann_M \mathfrak{m}.$ 
\end{defn}

\noindent Note that $\Soc M$ is naturally a  $k$-vector space.

\begin{hypothesis} \label{one}
Let $k$ be a field of characteristic zero, and let $R$ be a $\field{N}$-graded Noetherian ring such that $R_0 = k$, with homogeneous maximal ideal $\mathfrak{m}$.  Let $G$ be a linearly reductive group over $k$ acting on $R$ by $k$-\edit{linear} automorphisms such that $R$ is a $\ratGmod$, and let $M$ be a $\field{Z}$-graded $\ratRGmod$.  \edit{Moreover}, suppose that the actions of $G$ on $R$ and on $M$ respect their gradings.
\end{hypothesis}

\begin{rem}
Under Hypothesis \ref{one}, $\Soc M$ is a $\rat$ $R[G]$-submodule of $M$:  First we will see that it is a $G$-submodule.  For $g \in G$, $r \in \m$, and $u \in \Soc M$, since $G$ preserves the grading of $R$, $s:=g^{-1}r \in \m$, and since $M$ is an $R[G]$-module, $r  (g  u) = (g  s)(g  u) = g  (s  u) = g \cdot 0 = 0$. Since $\Soc M$ is also an $R$-submodule of $M$, it is also a $\ratRGmod$. 
\end{rem}

We will next state and prove Key Lemma \ref{hardlemma}, which implies a ``$\ratRGmod$ version" of the following theorem of Lyubeznik.

\begin{thm}[{\cite[Theorem 3.4]{Lyu}}] \label{Lyubeznik}Given a polynomial ring $R$ over a field $k$ of characteristic zero and ideals $I_1, \ldots, I_n$ of $R$, an iterated local cohomology module \[ M = \LC{i_1}{I_1}{ \LC{i_2}{I_2}{ \cdots ( \LC{i_n}{I_n}{R} )  \cdots }  }\] has only finitely many associated primes contained in a given maximal ideal of $R$.  If $M$ is supported only at a maximal ideal $\m$, then $M$ is isomorphic to a finite direct sum of copies of $E_R(R/\m)$. In particular, this holds when $M$ is a local cohomology module $\LC{i}{I}{R}$ that is supported only at $\m$, or when $M$ is any $\LC{0}{\m}{\LC{i}{I}{R}}$.
\end{thm}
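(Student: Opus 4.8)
Since this is Lyubeznik's theorem, the proof I would give is a reconstruction of the $D$-module argument of \cite{Lyu}, organized as follows. As $R=k[x_1,\dots,x_n]$ and $\operatorname{char} k=0$, the ring $D=D(R,k)$ of $k$-linear differential operators on $R$ is the Weyl algebra $A_n(k)$, and one has the notion of a \emph{holonomic} $D$-module; holonomic modules have finite length as $D$-modules, and they form a subcategory of $D$-modules closed under $D$-submodules, $D$-quotients, and extensions. First I would observe that $M$ is holonomic: $R$ itself is holonomic, $R_f$ is holonomic for every $f\in R$ (Bernstein, via the Bernstein--Sato polynomial), and if $N$ is a holonomic $D$-module and $J=(f_1,\dots,f_t)R$, then $\LC{i}{J}{N}$ is the $i$-th cohomology of the extended \v{C}ech complex $0\to N\to\bigoplus_a N_{f_a}\to\bigoplus_{a<b}N_{f_af_b}\to\cdots$, whose terms are localizations of $N$ (hence holonomic) and whose maps are $D$-linear, so $\LC{i}{J}{N}$ is a $D$-subquotient of holonomic modules and is therefore holonomic. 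Iterating through the layers of local cohomology shows that $M$ is holonomic, hence of finite length as a left $D$-module.

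Next I would treat the associated primes by first showing that a \emph{simple} holonomic $D$-module $L$ has exactly one associated prime over $R$. Its support is irreducible: if $\mathfrak q$ is the generic point of a maximal-dimensional component of $\operatorname{Supp}_R(L)$, then $L_{\mathfrak q}$ is supported only at the closed point of $\operatorname{Spec} R_{\mathfrak q}$ (because $\mathfrak q$ is minimal in $\operatorname{Supp}_R L$), so the $D$-submodule $\Gamma_{V(\mathfrak q)}(L)$ of $L$ localizes at $\mathfrak q$ to all of $L_{\mathfrak q}\neq 0$; being nonzero it equals $L$ by simplicity, so $\operatorname{Supp}_R(L)=V(\mathfrak q)$. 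Then for any prime $\p\supsetneq\mathfrak q$ the $D$-submodule $\Gamma_{V(\p)}(L)$ has support strictly inside $\operatorname{Supp}_R(L)$, hence vanishes, so $\p\notin\operatorname{Ass}_R(L)$; since every associated prime lies in $\operatorname{Supp}_R(L)=V(\mathfrak q)$, we conclude $\operatorname{Ass}_R(L)=\{\mathfrak q\}$. Now a $D$-module composition series $0=M_0\subsetneq M_1\subsetneq\cdots\subsetneq M_m=M$ with simple quotients gives $\operatorname{Ass}_R(M)\subseteq\bigcup_{i=1}^m \operatorname{Ass}_R(M_i/M_{i-1})$, a finite set; in particular only finitely many associated primes of $M$ lie in a given maximal ideal.

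Finally, suppose $\operatorname{Supp}_R(M)=\{\m\}$. Each composition factor $M_i/M_{i-1}$ then has support $\{\m\}$, so it is a simple holonomic $D$-module supported at the single closed point $\m$; by Kashiwara's equivalence, the category of $D$-modules supported at $\m$ is equivalent to the category of finite-dimensional $(R/\m)$-vector spaces, so up to isomorphism there is exactly one such simple module, namely $E:=E_R(R/\m)$ (which is $\cong\LC{n}{\m}{R}$), and moreover $\operatorname{Ext}^1_D(E,E)=0$. Hence every composition factor of $M$ is a copy of $E$ and every extension among them splits, so $M\cong E^{\oplus m}$. (Alternatively, without Kashiwara: $M$ is $\m$-power torsion with $\dim_{R/\m}\Soc M=d\le m<\infty$, so $M\hookrightarrow E_R(R/\m)^{\oplus d}$, and comparing the finite $D$-module lengths of $M$ and $E_R(R/\m)^{\oplus d}$ forces equality.) Taking $M=\LC{0}{\m}{\LC{i}{I}{R}}$, or $M=\LC{i}{I}{R}$ when the latter is supported only at $\m$, gives the final assertion.

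The real obstacle is that none of this is elementary: holonomicity of $R_f$, the characteristic-variety and good-filtration formalism behind ``holonomic $\Rightarrow$ finite $D$-length'', and Kashiwara's description of $D$-modules supported at a point all belong to $D$-module theory in characteristic zero, and a self-contained account would have to develop (or, as here, cite) that machinery. Granting it, what remains is light bookkeeping: finite $D$-length, a composition series to corral the associated primes, and the identification of the unique simple holonomic module supported at a point with the injective hull $E_R(R/\m)$.
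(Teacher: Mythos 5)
The paper does not prove this theorem: it is quoted verbatim as Lyubeznik's result \cite[Theorem 3.4]{Lyu} and used as a black box. So there is no internal proof to compare against; the relevant comparison is with Lyubeznik's own argument, and your reconstruction follows that argument faithfully. The three pillars you give --- holonomicity of $M$ via the \v{C}ech complex and closure of holonomic modules under subquotients, uniqueness of the associated prime of a simple holonomic module together with the $\Ass{R}{B}\subseteq\Ass{R}{A}\cup\Ass{R}{B/A}$ bound along a $D$-module composition series, and Kashiwara's equivalence to identify the unique simple holonomic module supported at $\m$ with $E_R(R/\m)$ and to kill $\operatorname{Ext}^1_D(E,E)$ --- are exactly the mechanism behind the cited theorem, and the reasoning is sound.

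One caveat concerns your parenthetical ``alternatively, without Kashiwara'' route. The embedding $M\hookrightarrow E_R(R/\m)^{\oplus d}$ that you get from $\Soc M$ being essential in $M$ is a priori only $R$-linear; $E_R(R/\m)^{\oplus d}$ does not come equipped with a $D$-module structure making this inclusion $D$-linear, so one cannot directly ``compare $D$-module lengths'' of $M$ and $E_R(R/\m)^{\oplus d}$ across that map. Moreover the bound $\dim_k\Soc M\le m$ that you invoke already relies on identifying each $D$-composition factor with $E_R(R/\m)$, which is the Kashiwara step you were trying to avoid. This side remark can simply be dropped; the main thread via Kashiwara's equivalence is complete and is the standard proof.
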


\begin{keylemma} \label{hardlemma}
\edit{Suppose that $R, \mathfrak{m}, G$ and $M$ satisfy \textup{Hypothesis \ref{one}}, and that $M$ is also an injective Artinian $R$-module supported only at $\mathfrak{m}$.  Let $V = \Soc M.$  Then there exists a $G$-submodule $\widetilde{\findual{V} }$ of $\dual{M}$ \textup{(}see \textup{Definition \ref{gradeddual}}\textup{)} such that  $\widetilde{\findual{V}} \cong \findual{V}$ as $\ratGmod$s, and  \[ M \cong  \dual{ \left(R \tensor_k \widetilde{\findual{V}}  \right)}\] as $\ratRGmod$s, where $\dual{ \left( R \tensor_k \widetilde{\findual{V}}  \right)} \cong \dual{R} \tensor_k V$ as $\ratGmod$s.}
\end{keylemma}

\begin{rem}
\edit{In the statement of Key Lemma \ref{hardlemma}, $g \in G$ acts on $r \tensor u \in R \tensor_k \widetilde{\findual{V}}$ by $g  (r \tensor u) = gr \tensor gu$, and if $s \in R$, $s  (r \tensor u) = sr \tensor u.$ Note that $\widetilde{\findual{V}}$ is \emph{not} (necessarily) an $R$-module. }
\end{rem}

\begin{proof}
If $x_1, \ldots, x_n$ generate $\mathfrak{m}$, we have the exact sequence of $R$-modules: \[ \xymatrix{ &0 \ar[r] &V \ar[r]^i &M \ar[r]^{\theta} &M^{\oplus n},}\] where $i$ is the inclusion of $\ratRGmod$s, and $\theta(u) = (x_1 u, \ldots , x_n u)$ for $u \in M$.  By taking graded duals, we obtain the following exact sequence of $R$-modules: \[\xymatrix{ &\dual{\left(M^{\oplus n}\right)}  \ar[r]^{\dual{\theta}} & \dual{M} \ar[r]^{\dual{i}} & \dual{V} \ar[r] & 0,} \] where $\dual{i}$ is also a map of $\ratRGmod$s.  

\edit{For some $\alpha \in \field{N}$, $M \cong E_R(k)^{\oplus \alpha}$ as $R$-modules. Since $\Ann_{E_R(k)}{\m} = k$, $V= \Soc M$ must be a finite-dimensional $k$-vector space of dimension $\alpha$, so $\dual{V} = \findual{V}$.}

Under the canonical isomorphism $\dual{\left(M^{\oplus n}\right)} \cong  \left(\dual{M}\right)^{\oplus n}$, for $f_1, \ldots, f_n \in \dual{M}$,  $\dual{\theta} (f_1, \ldots, f_n) = x_1 f_1 + \ldots + x_n f_n.$  This means that $\image (\dual{\theta}) = \mathfrak{m} \dual{M},$ and \[\xymatrix{ & 0 \ar[r] & \mathfrak{m} \dual{M}  \ar[r] & \dual{M} \ar[r]^{\dual{i}} & \findual{V} \ar[r] & 0}\] is an exact sequence of $\ratRGmod$s.
Moreover, since $G$ is linearly reductive, the map $\dual{i}$ has a $G$-module map splitting, $\phi$.  If $\widetilde{\findual{V}} \subseteq \dual{M}$ \edit{denotes} the image of $\phi$ (\edit{a $G$-module, but \emph{not} necessarily an $R$-module}), then \edit{$\widetilde{\findual{V}} = \dual{M}/\mathfrak{m} \dual{M}$ as $\ratGmod$s.}  

By the universal property of base change, the $k$-linear inclusion $\widetilde{\findual{V}} \inj \dual{M}$ induces a map of $R$-modules $\psi: R \tensor_k  \widetilde{\findual{V}} \to \dual{M}$ such that $\sum_i r_i \tensor v_i \mapsto \sum_i r_i v_i$.  Since $M$ is Artinian, $\dual{M}$ is Noetherian, and by Nakayama's lemma, a $k$-basis for $\widetilde{\findual{V}}\subseteq \dual{M}$ generates $\dual{M}$ minimally as an $R$-module, so $\psi$ is surjective.  For $\sum \limits_i r_i \tensor v_i \in R \tensor_k  \widetilde{\findual{V} }$, $g \cdot  \sum \limits_i r_i \tensor v_i =  \sum \limits_i g \cdot r_i \tensor g \cdot v_i$, and since $\dual{M}$ is an $R[G]$-module, $g \cdot \sum \limits_i r_i v_i  = \sum \limits_i (g \cdot r_i) (g \cdot v_i)$ in $\dual{M}$, so $\psi$ is $G$-equivariant.

\edit{Since $\dim_k \widetilde{\findual{V}} = \dim_k V = \alpha$, and $\dual{M} \cong R^{\oplus \alpha}$} (see Remark \ref{RdualE}), $\psi$ must be an isomorphism.  Noting Lemma \ref{compatible}, by taking graded duals, we have that $M \cong \dbldual{M} \cong \dual{\left(R \tensor_k  \widetilde{\findual{V} }\right)}$ as $\ratRGmod$s.   Moreover, by Lemma \ref{isom}, as $\ratGmod$s, $\dual{\left(R \tensor_k  \widetilde{\findual{V} }\right)} \cong \dual{R} \otimes_k \findual{ \left(\widetilde{\findual{V} } \right) } \cong  \dual{R} \otimes_k \dblfindual{V} \cong  \dual{R} \otimes_k V.$
\end{proof}

\begin{lem} \label{invzero}
Let $G$ be a linearly reductive group over a field $k$ and let $R$ be a $k$-vector space that is a $\field{Z}$-graded $G$-module, such that $G$ preserves its grading and $\dim_k R_i < \infty$ for all $i \in \field{Z}$.  Suppose that $V$ is a $G$-module.  If some simple $G$-submodule of $V$ is a $G$-submodule of $R$, then \[ (\dual{R} \otimes_k V)^G=0  \iff \dual{R} \otimes_k V=0.\]  In particular, if $R$, $\mathfrak{m}$, $G,$ and $M$ satisfy \emph{Hypothesis \ref{one} }, and $M$ is also an injective Artinian $R$-module supported only at $\mathfrak{m}$, then $ M^G = 0 \iff M = 0.$
\end{lem}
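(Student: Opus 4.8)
The plan is to prove Lemma \ref{invzero} by reducing the general statement to the key structural facts about isotypical components and Lemma \ref{tensorinvariants}, and then to obtain the ``in particular'' clause by feeding in the conclusion of Key Lemma \ref{hardlemma}. First I would observe that the implication $\dual{R}\otimes_k V=0 \Rightarrow (\dual{R}\otimes_k V)^G=0$ is trivial, so the content is the converse. Assume $\dual{R}\otimes_k V\neq 0$; since $V$ is a $G$-module it contains some simple $G$-submodule, and by hypothesis we may choose a simple $G$-submodule $W\subseteq V$ that is also a $G$-submodule of $R$. Then $\findual{W}\subseteq \dual{R}$ as $G$-modules (taking graded duals of the inclusion $W\hookrightarrow R$, using that $\dual{(-)}$ is exact and that on finite-dimensional pieces it agrees with $\findual{(-)}$, and Remark \ref{gmod}). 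Now I would apply Lemma \ref{tensorinvariants} with $U=\dual{R}$: since the simple $G$-submodule $\findual{W}$ of $\dual{R}$ satisfies $\dblfindual{W}\cong W\hookrightarrow V$, we get $(\dual{R}\otimes_k V)^G\neq 0$. (One must take a small amount of care that Lemma \ref{tensorinvariants} is stated for rational $G$-modules and directed unions thereof; $\dual{R}$ and $V$ should be handled by passing to a large enough finite-dimensional $G$-stable subspace of $\dual{R}$ containing $\findual{W}$, or by noting the argument of Lemma \ref{tensorinvariants} only uses the existence of one simple summand on each side.) This proves the contrapositive of the nontrivial direction, hence the equivalence.

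For the ``in particular'' clause, suppose $R,\mathfrak{m},G,M$ satisfy Hypothesis \ref{one} and $M$ is an injective Artinian $R$-module supported only at $\mathfrak{m}$. By Key Lemma \ref{hardlemma}, $M\cong \dual{R}\otimes_k V$ as rational $G$-modules, where $V=\Soc M$. The remaining point is to check the hypothesis of the first part: every simple $G$-submodule of $V=\Soc M$ occurs as a $G$-submodule of $R$. Here I would invoke Remark \ref{coordring}: every finite-dimensional simple rational $G$-module occurs as a $G$-submodule of the coordinate ring $k[G]$, and in the settings of interest (as the final Remark of Section \ref{G-mod} notes) these all occur inside $R$ itself. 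So any simple $G$-submodule of $\Soc M$ is isomorphic to a $G$-submodule of $R$; applying the first part with $V=\Soc M$ gives $M^G=0\iff M=0$ (noting $M=0\iff V=\Soc M=0\iff \dual{R}\otimes_k V=0$, since $M$ is injective Artinian).

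The main obstacle I anticipate is purely bookkeeping rather than conceptual: reconciling the ``rational $G$-module'' hypotheses of Lemma \ref{tensorinvariants} and the isotypical-component machinery (Definition \ref{isotyp}, Corollary \ref{isotypinv}) with the possibly infinite-dimensional module $\dual{R}$, and making sure the inclusion $\findual{W}\hookrightarrow\dual{R}$ really is an inclusion of rational $G$-modules that lands in a single isotypical component — so that Lemma \ref{tensorinvariants} applies verbatim. A clean way to finesse this is to use Corollary \ref{isotypinv}: write $\dual{R}=\widehat{\dual{R}}\oplus(\text{rest})$ where $\widehat{\dual{R}}$ is the $\findual{W}$-isotypical component, which is nonzero since $\findual{W}\hookrightarrow\dual{R}$, and then $(\dual{R}\otimes_k \findual{W})^G=(\widehat{\dual{R}}\otimes_k\findual{W})^G\neq 0$ because $\findual{W}\cong\findual{W}$ appears in $\widehat{\dual{R}}$; since $\findual{W}\hookrightarrow V$ is not quite available but $W\hookrightarrow V$ is, one actually runs the argument on the $W$-side instead: $\dblfindual{W}\cong W$ is a simple $G$-submodule of $V$, and $\findual{(\dblfindual{W})}\cong\findual{W}\hookrightarrow\dual{R}$, so Lemma \ref{tensorinvariants} (with $U=V$, $W=\dual{R}$, and the simple submodule $\dblfindual{W}\subseteq V$) directly yields $(V\otimes_k\dual{R})^G\neq 0$. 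Aside from getting the variance of the dual right, the proof is short.
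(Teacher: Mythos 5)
Your proof of the main equivalence is correct and follows essentially the same route as the paper: reduce to producing a nonzero invariant in $\dual{R}\otimes_k V$, take the simple $G$-submodule $W$ of $V$ that also lives in $R$, move to $\findual{W}\subseteq\dual{R}$, and apply Lemma~\ref{tensorinvariants} with $U=V$, $W=\dual{R}$, and the simple summand $W\cong\dblfindual{W}\subseteq V$. One step you gloss over that the paper spells out: taking graded duals of $W\hookrightarrow R_n\subseteq R$ gives a \emph{surjection} $\dual{R}\supseteq\findual{R_n}\twoheadrightarrow\findual{W}$, not an injection; to get $\findual{W}\hookrightarrow\dual{R}$ you must split this surjection, which is exactly where linear reductivity is used. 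Your parenthetical worries about rationality and the switch to $U=V$ at the end both resolve correctly and land you on the paper's actual argument.

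For the ``in particular'' clause, your instinct that one must supply the hypothesis ``some simple $G$-submodule of $\Soc M$ is a $G$-submodule of $R$'' before invoking the first part is a genuinely good observation — this is a real subtlety that the paper's one-line proof does not address. However, the way you fill it in does not work: Remark~\ref{coordring} together with the closing remark of Section~\ref{G-mod} is specific to $G=SL_r(k)$ acting on $R=k[X]$, whereas Hypothesis~\ref{one} allows an arbitrary linearly reductive $G$ acting on an arbitrary $\mathbb{N}$-graded Noetherian $k$-algebra $R$ with $R_0=k$. Under Hypothesis~\ref{one} alone it is false that every simple rational $G$-module, or even every simple $G$-submodule of $\Soc M$, embeds in $R$ (for a degenerate example, take $R=k$ with trivial $G$-action and $M=k$ with a nontrivial character of $G=\mathbb{G}_m$). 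The correct reading, consistent with how the lemma is actually used, is that the first part's hypothesis is carried along implicitly: in Lemma~\ref{indecomp} it is stated explicitly, and in the proof of Main Theorem~\ref{general} the module $M=\LC{0}{\m}{\LC{i}{I}{R}}$ inherits the property that its simple $G$-submodules are $G$-submodules of $R$ from Lemma~\ref{GactionLC}. So: keep your observation that the hypothesis is needed, but replace the appeal to Remark~\ref{coordring} with the fact that in every application the hypothesis is verified separately (via Lemma~\ref{GactionLC} or by explicit assumption).
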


\begin{proof}
The backward implication clearly holds.  For the forward implication, suppose that $\dual{R} \otimes_k V \neq 0$ and that a simple $G$-submodule $W$ of $V$ is also a $G$-submodule of $R$, so that $W \inj R_n$ as $G$-modules for some $n$.  Dualizing, $\dual{R} \supseteq \findual{R_n} \surj \findual{W},$ which splits as $G$-modules since $G$ is linearly reductive, so $\findual{W} \inj \dual{R}$.  Thus, by Lemma \ref{tensorinvariants}, $(\dual{R} \otimes_k V)^G \neq 0$.

The last statement can be seen by applying the result to the case when $V = \Soc M$ and noting Lemma \ref{hardlemma}.
\end{proof}

\begin{lem} \label{indecomp}
Suppose that $R$, $\mathfrak{m}$, $G$, and $M$ satisfy \textup{Hypothesis \ref{one}} and that $M$ is also a nonzero injective Artinian $R$-module supported only at $\mathfrak{m}$.  Assume that all simple $G$-submodules of $\Soc M$ are also $G$-submodules of $R$.  If $\Soc M = V_1 \oplus \ldots \oplus V_\alpha$ as $G$-modules, where each $V_i$ is nonzero, then $M =  (\dual{R} \tensor_k V_1) \oplus \ldots \oplus (\dual{R} \tensor_k V_\alpha)$ as $\ratRGmod$s, where each $\dual{R} \tensor_k V_i$ is nonzero.  Moreover, $M^G = (\dual{R} \tensor_k V_1)^G \oplus \ldots \oplus (\dual{R} \tensor_k V_\alpha)^G$ as $R^G$-modules, where each $(\dual{R} \tensor_k V_i)^G$ is nonzero.  In particular, if $M^G$ is a indecomposable $R^G$-module, then $\Soc M$ is a simple $G$-module.  
\end{lem}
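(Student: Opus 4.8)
The plan is to push the given decomposition of $\Soc M$ through Key Lemma \ref{hardlemma} and then invoke Lemma \ref{invzero} to guarantee that every resulting summand of $M^G$ is nonzero. Since $M$ is a nonzero injective Artinian $R$-module supported only at $\mathfrak{m}$, it is $\cong E_R(k)^{\oplus\alpha}$ for some $\alpha\ge 1$, so $V:=\Soc M$ is a nonzero finite-dimensional rational $G$-module (of dimension $\alpha$); this is exactly the situation of Key Lemma \ref{hardlemma}, which provides a $G$-submodule $\widetilde{\findual{V}}\subseteq\dual{M}$ with $\widetilde{\findual{V}}\cong\findual{V}$ as rational $G$-modules and $M\cong\dual{\left(R\tensor_k\widetilde{\findual{V}}\right)}$ as rational $R[G]$-modules.

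First I would split. Writing $\Soc M=V_1\oplus\cdots\oplus V_\alpha$ with each $V_i\ne 0$, dualizing gives $\findual{V}=\findual{V_1}\oplus\cdots\oplus\findual{V_\alpha}$, and transporting this decomposition through the chosen $G$-module isomorphism $\findual{V}\cong\widetilde{\findual{V}}$ lets me write $\widetilde{\findual{V}}=\widetilde{\findual{V_1}}\oplus\cdots\oplus\widetilde{\findual{V_\alpha}}$ with each $\widetilde{\findual{V_i}}\cong\findual{V_i}$ as rational $G$-modules. Since $\tensor_k$ distributes over direct sums, $R\tensor_k\widetilde{\findual{V}}=\bigoplus_i\left(R\tensor_k\widetilde{\findual{V_i}}\right)$ as rational $R[G]$-modules, and because the sum is finite the exact contravariant functor $\dual{(-)}$ carries this to $M\cong\dual{\left(R\tensor_k\widetilde{\findual{V}}\right)}=\bigoplus_i\dual{\left(R\tensor_k\widetilde{\findual{V_i}}\right)}$ as rational $R[G]$-modules. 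Using Lemma \ref{isom} (whose isomorphism is readily checked to be $R$-linear, hence $R[G]$-linear), together with $\widetilde{\findual{V_i}}\cong\findual{V_i}$ and $\dblfindual{V_i}\cong V_i$, each summand is $\dual{\left(R\tensor_k\widetilde{\findual{V_i}}\right)}\cong\dual{R}\tensor_k V_i$, where $\dual{R}\cong E_R(k)$ is an $R$-module and $G$ acts diagonally. This yields the asserted decomposition $M\cong(\dual{R}\tensor_k V_1)\oplus\cdots\oplus(\dual{R}\tensor_k V_\alpha)$ of rational $R[G]$-modules, and each summand is nonzero because $V_i$ and $\dual{R}$ are nonzero $k$-vector spaces.

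Next I would pass to $G$-invariants. By Remark \ref{invexact} the functor $(-)^G$ is exact on rational $G$-modules, in particular additive and $R^G$-linear, so $M^G=(\dual{R}\tensor_k V_1)^G\oplus\cdots\oplus(\dual{R}\tensor_k V_\alpha)^G$ as $R^G$-modules. Each $V_i$ is a nonzero finite-dimensional rational $G$-module, hence contains a simple $G$-submodule, which is then a simple $G$-submodule of $\Soc M$ and therefore, by hypothesis, a $G$-submodule of $R$; so Lemma \ref{invzero}, applied with its ``$V$'' taken to be $V_i$, gives $(\dual{R}\tensor_k V_i)^G=0\iff\dual{R}\tensor_k V_i=0$, and since the right side fails we get $(\dual{R}\tensor_k V_i)^G\ne 0$ for every $i$. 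Finally, if $M^G$ is an indecomposable $R^G$-module, then $\Soc M$ cannot be written as a direct sum $V_1\oplus V_2$ of two nonzero $G$-modules (else $M^G=(\dual{R}\tensor_k V_1)^G\oplus(\dual{R}\tensor_k V_2)^G$ would split it into two nonzero $R^G$-modules); since $\Soc M$ is a nonzero rational $G$-module and $G$ is linearly reductive, this forces $\Soc M$ to be a simple $G$-module.

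The routine content is bookkeeping: verifying that the isomorphisms coming from Key Lemma \ref{hardlemma}, Lemma \ref{isom}, and ``dual of a finite direct sum'' are all compatible as rational $R[G]$-modules, and that $(-)^G$ respects the $R^G$-module structure. The one genuinely load-bearing step is the nonvanishing $(\dual{R}\tensor_k V_i)^G\ne 0$, i.e.\ the application of Lemma \ref{invzero} (and behind it Lemma \ref{tensorinvariants}): without the standing hypothesis that the simple $G$-submodules of $\Soc M$ occur inside $R$, an invariant summand could vanish and the indecomposability argument would break, so that hypothesis is precisely what is being used.
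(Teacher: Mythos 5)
Your proposal is correct and follows essentially the same route as the paper: apply Key Lemma \ref{hardlemma} to write $M \cong \dual{R}\tensor_k \Soc M$, distribute the tensor over the given decomposition of $\Soc M$, then apply the exact functor $(-)^G$ and invoke Lemma \ref{invzero} (via the standing hypothesis that the simple constituents of $\Soc M$ occur in $R$) to get nonvanishing of each invariant summand. The paper's proof is two sentences and suppresses the bookkeeping you spell out — in particular the $R[G]$-linearity of the isomorphisms from Lemma \ref{isom} — but the underlying argument is identical.
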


\begin{proof}
The first statement follows from Lemma \ref{hardlemma} after applying $\dual{R} \tensor_k (-)$.  The second follows by applying $(-)^G$ and noting that each summand is nonzero by Lemma \ref{invzero}.
\end{proof}

\begin{lem}  \label{GactionLC}
Let $G$ be a linearly reductive group over a field $k$ acting on a Noetherian $k$-algebra $R$, and let $N$ be a $\ratRGmod$.  Let $J \subseteq R^G$ be an ideal, and let $I = J R$.  Then every $\LC{i}{I}{N}$ is also a $\ratRGmod$, and every simple $G$-submodule of $\LC{i}{I}{N}$ is also a $G$-submodule of $N$.  Moreover, there is a canonical isomorphism of $R^G$-modules \[\invpt{\LC{i}{I}{N}} \cong \LC{i}{J}{N^G}.\]
\end{lem}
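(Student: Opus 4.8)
The plan is to compute the local cohomology $\LC{i}{I}{N}$ using the Čech complex on a specific set of generators of $I$, chosen so that the group $G$ acts on the whole complex. First I would pick homogeneous generators $f_1,\dots,f_m$ of the ideal $J\subseteq R^G$; since these lie in $R^G$, each $f_j$ is $G$-invariant, so the localization $N_{f_{j_1}\cdots f_{j_p}}$ inherits a $\ratRGmod$ structure (the action of $g$ on $u/(f_{j_1}\cdots f_{j_p})^t$ is $(gu)/(f_{j_1}\cdots f_{j_p})^t$), and the Čech differentials, being $R$-linear maps built from the canonical localization maps, are $G$-equivariant. Hence the Čech complex $\check{C}^\bullet(f_1,\dots,f_m;N)$ is a complex of $\ratRGmod$s, its cohomology $\LC{i}{I}{N}$ is a $\ratRGmod$, and the localization of a $\ratGmod$ is again rational because localizing at a $G$-fixed element commutes with taking the directed union of finite-dimensional $G$-stable subspaces (inverting $f_j$ on each piece). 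This handles the first assertion.

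Next, for the claim that every simple $G$-submodule of $\LC{i}{I}{N}$ is a $G$-submodule of $N$: I would argue via isotypical components. Fix a simple $\ratGmod$ $V$ that does \emph{not} occur in $N$, i.e., whose $V$-isotypical component of $N$ is zero. Then, since $N_{f_{j_1}\cdots f_{j_p}}$ is a directed union of copies of (shifts of) $N$ as a $G$-module — more precisely each localization is a filtered colimit of maps $N\xrightarrow{\cdot 1} N$ up to the invertible $G$-fixed denominator, so it contains no copy of $V$ either — the whole Čech complex has trivial $V$-isotypical component. Because $G$ is linearly reductive, the functor sending a $\ratGmod$ to its $V$-isotypical component is exact (it is a direct summand of the identity functor, cf. Remark \ref{invexact} and Definition \ref{isotyp}), so it commutes with cohomology; therefore the $V$-isotypical component of $\LC{i}{I}{N}$ vanishes as well. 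Equivalently, any simple $G$-submodule of $\LC{i}{I}{N}$ must already occur in $N$, which is the statement.

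For the final isomorphism $\invpt{\LC{i}{I}{N}}\cong\LC{i}{J}{N^G}$: applying the exact functor $(-)^G$ to the Čech complex $\check{C}^\bullet(f_1,\dots,f_m;N)$ commutes with cohomology, so $\invpt{\LC{i}{I}{N}}\cong H^i\bigl(\invpt{\check{C}^\bullet(f_1,\dots,f_m;N)}\bigr)$. It then suffices to identify $\invpt{\check{C}^\bullet(f_1,\dots,f_m;N)}$ with the Čech complex $\check{C}^\bullet(f_1,\dots,f_m;N^G)$ of $N^G$ over $R^G$ on the same elements $f_1,\dots,f_m$ (which generate $J$ in $R^G$). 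This reduces to a single natural identification: $\invpt{(N_{f_{j_1}\cdots f_{j_p}})}\cong (N^G)_{f_{j_1}\cdots f_{j_p}}$, compatibly with the restriction maps. That identification holds because inverting the $G$-fixed element $f:=f_{j_1}\cdots f_{j_p}$ commutes with taking invariants — concretely, localization is $\varinjlim(N\xrightarrow{\cdot f}N)$, and $(-)^G$, being exact, commutes with this filtered colimit, while the transition maps $\cdot f$ are $G$-equivariant — so $\invpt{(N_f)}\cong\varinjlim\invpt{(N)}=\varinjlim(N^G\xrightarrow{\cdot f}N^G)=(N^G)_f$; naturality in the indices $j_1<\dots<j_p$ is then immediate. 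Assembling these isomorphisms degree by degree gives an isomorphism of complexes over $R^G$, hence the stated isomorphism on cohomology.

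The main obstacle I anticipate is not any one of these steps in isolation but the bookkeeping needed to make everything simultaneously $G$-equivariant and compatible: verifying that the Čech complex really is a complex of $\ratRGmod$s (rationality of the localizations), that $(-)^G$ commutes with the relevant filtered colimits and with cohomology, and that all the identifications are natural in the Čech indices so that they glue into an isomorphism of complexes rather than just term-by-term isomorphisms. The conceptual engine — exactness of $(-)^G$ and of the isotypical-component functors for a linearly reductive $G$ in characteristic zero — does all the real work; the rest is careful but routine verification.
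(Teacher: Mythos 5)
Your proposal is correct and takes essentially the same approach as the paper: both compute $\LC{i}{I}{N}$ via the \v Cech complex on $G$-invariant generators of $J$, check that localizing at $G$-fixed elements yields rational $R[G]$-modules with $G$-equivariant differentials (presenting $N_f$ as $\varinjlim(N \xrightarrow{\cdot f} N \xrightarrow{\cdot f} \cdots)$), and then use exactness of $(-)^G$ to commute it past cohomology together with the identification $(N_f)^G \cong (N^G)_f$. The only cosmetic slip is describing the transition maps as ``$N \xrightarrow{\cdot 1} N$''; they are multiplication by the $G$-fixed element $f$, which is what makes them $G$-equivariant.
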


\begin{proof}
\edit{Since $N$ is a $G$-module, for $f \in R^G$, $N_f = \lim \limits_{\longrightarrow} \left(N \overset{\cdot f}{\to} N \overset{\cdot f}{\to} N \overset{\cdot f}{\to} \ldots \right)$ as $G$-modules:  if $g \in G$ and $\frac{u}{f^m} \in N_f,$ which corresponds to $\left[ u \right]$ in the $m$th copy of $N$ in the direct limit, then $g \cdot \frac{u}{f^m} = \frac{g \cdot u}{f^m}$, which corresponds to $[g \cdot u]$ in the $m$th copy of $N$ in the direct limit.  This makes $N_f$ a rational $G$-module, as $N$ is one, and all simple $G$-submodules of $N_f$ are also $G$-submodules of $N$.  }

Say that $J = (f_1, \ldots, f_n)$. Since products of any of the $f_j$ are fixed by $G$, every term in the following complex is a rational $G$-module: \[ \xymatrix{ 0 \ar[r] & N  \ar[r]^(.4){\delta_0} & \bigoplus \limits_{j=1}^n N_{f_j} \ar[r]^(.55){\delta_1} & \ldots \ar[r]^(.3){\delta_{n-2}} & \bigoplus \limits_{j=1}^n N_{f_1 \ldots \widehat{f_j} \ldots f_n} \ar[r]^(.55){\delta_{n-1}} &N_{f_1 f_2 \ldots f_n} \ar[r] & 0. } \]

\noindent Since on each summand, the maps $\delta_j$ are, up to a sign, further localization maps, they are $G$-equivariant.  This makes the cohomology modules $\LC{i}{I}{N}$ rational $G$-modules as well, and they inherit the property that all their simple $G$-submodules are also $G$-submodules of $N$.  Additionally, these local cohomology modules are $R[G]$-modules since $N$ is one:  given any $g \in G$, $r \in R$, and $\left[ \frac{u}{f^m} \right] \in \LC{i}{I}{N}$, $g \left( r \left[ \frac{u}{f^m} \right]  \right) = g \left[ \frac{r u}{f^m} \right] = \left[ \frac{(gr)(gu)}{f^m} \right] = (g r)\left(g \left[ \frac{u}{f^m} \right] \right).$  Thus, they are $\ratRGmod$s.

For the last statement, first notice that for any $f \in R^G$, $ \left(N^G\right)_f =  \left( N_f\right)^G$.  As taking invariant parts commutes with direct sums, $\LC{i}{J}{N^G}$ is isomorphic the cohomology of the complex \[ \xymatrix @C.2in{ 0 \ar[r] & N^G  \ar[r]^(.35){d_0} &  \invpt{ \bigoplus \limits_{j=1}^n N_{f_j} } \ar[r]^(.65){d_1} & \ldots \ar[r]^(.25){d_{n-2}} & \invpt{ \bigoplus \limits_{j=1}^n N_{f_1 \ldots \widehat{f_j} \ldots f_n} } \ar[r]^(.57){d_{n-1}} & \invpt{ N_{f_1 f_2 \ldots f_n}}\ar[r] & 0. } \] where $d_i$ is the restriction of $\delta_i$ to the invariant part of the $i^\text{th}$ module in the complex.  Since $G$ is linearly reductive, the functor $V \mapsto V^G$ of $G$-modules is exact (see Remark \ref{invexact}),  and we conclude that  $(\LC{i}{I}{N})^G \cong \LC{i}{J }{N^G}.$
\end{proof}

\begin{main} \label{general} \label{main} Let $R$ be a polynomial ring over a field $k$ of characteristic zero with homogeneous maximal ideal $\m$.  Let $G$ be a linearly reductive group over $k$ acting by degree-preserving $k$-automorphisms on $R$, such that $R$ is a $\ratGmod$.  Assume that $A = R^G$ has homogeneous maximal ideal $\m_A$, let $d = \dim A$, and let $I = \m_A R$.  Then $\LC{d}{I}{R} \neq 0$ and $I$ is generated up to radicals by $d$ elements and not fewer, so that $\LC{i}{I}{R} = 0 \text{ for } i>d.$  Moreover, the following hold:

\begin{enumerate}
\item[\textup{(1)}] If $i < d$, then $\m$ is not an associated prime of $\LC{i}{I}{R}$; i.e., $\LC{0}{\m}{\LC{i}{I}{R}}=0$.
\end{enumerate}  

\noindent If $\LC{d}{I}{R}$ is supported only at $\m$ \emph{(}e.g., this holds if, after localization at any of the indeterminates of $R$, $I$ requires fewer than $d$ generators up to radical\emph{)}, then
\begin{enumerate}
\item[\textup{(2)}] If $H := \LC{d}{I}{R}$, then $V := \Soc H$ is a simple $G$-module, and 
\item[\textup{(3)}] There exists a $G$-submodule $W$ of $H$ such that $H \cong \dual{ \left( R \tensor_k W \right) }$ as rational $R[G]$-modules, which, as $\ratGmod$s, is isomorphic to $E_R(k) \tensor_k V$ \emph{(}where the action of $G$ on $E_R(k) \cong \dual{R}$ is induced by its action on $R$\emph{)}. 
\end{enumerate}  
\end{main}

\begin{proof}
By Lemma \ref{GactionLC}, we know that for every $i$, $(\LC{i}{I}{R})^G \cong \LC{i}{m_A}{A}$ as $R^G$-modules.   The invariant part of $\LC{d}{I}{R}$, $\LC{d}{m_A}{A}$, is nonzero since $d = \dim A$, so $\LC{d}{I}{R} \neq 0.$  The maximal ideal $\m_A$ of $A$ is generated, up to radical, by $d = \dim A$ elements, so its expansion to $R$, $I = \m_A R$, will also be generated up to radical by the same $d$ elements. 

For (1), assume that $i < d$. By Lemma \ref{GactionLC}, $\LC{i}{I}{R}$ is a $\ratRGmod$, so that its submodule of elements killed by some power of $\mathfrak{m}$, $\LC{0}{\mathfrak{m}}{ \LC{i}{I}{R} }$, is also a $\ratRGmod$.  By definition, $\left( \LC{0}{\mathfrak{m}}{ \LC{i}{I}{R} }\right)^G$ is the $R^G$-submodule of $\LC{i}{I}{R}$ consisting of invariant elements that are killed by some power of $\m$; thus, it is the $R^G$-submodule of $\LC{i}{I}{R}^G \cong \LC{i}{\m_A}{A}$ (by Lemma \ref{GactionLC}) consisting of elements killed by a power of $\m$.  By the theorem of Hochster and J. Roberts \cite[Main Theorem]{H-R} or of Boutot \cite[Th\'eor\`eme]{Boutot}, since $G$ is linearly reductive and $A=R^G$, $A$ must be Cohen-Macaulay.  Since $i < d = \dim A$, $\LC{i}{ \mathfrak{m}_A}{A} = 0$, so in particular, its submodule $\left( \LC{0}{\mathfrak{m}}{ \LC{i}{I}{R} }\right)^G$ must also vanish.  By Theorem \ref{Lyubeznik}, $\LC{0}{\mathfrak{m}}{ \LC{i}{I}{R} }$ is isomorphic to a finite direct sum of copies of $E_R(k).$  Therefore, by Lemma \ref{invzero}, since its invariant part vanishes, $\LC{0}{\mathfrak{m}}{ \LC{i}{I}{R} }$ must also vanish.

Now suppose that $\LC{d}{I}{R}$ is supported only at $\mathfrak{m}$.  Since $R$ is a domain (so $A$ also is), $\omega_A$, the canonical module of $A$, must be rank one and isomorphic to an ideal of $A$, so is torsion free \cite[Proposition 3.3.18]{B-H}.  Thus, $\omega_A$ is an indecomposable $A$-module.  Therefore, \[  \Hom_A\left(\omega_A, E_A(A/\m_A)\right) \cong \LC{d}{\mathfrak{m}_A}{A} \cong (\LC{d}{I}{R})^G  \]  must also be indecomposable, so $\Soc \LC{d}{I}{R}$ is a simple $G$-module by Lemma \ref{indecomp}.

Part (3) is an application of Key Lemma \ref{hardlemma}.
\end{proof}


\section{Proof of the Main Theorem on Minors} \label{mainthmminors}

\begin{rem} \label{groupactionclosed}
If a topological group $G$ acts continuously on a topological space $Z$ permuting a finite collection of closed sets $V_1, \ldots, V_m \subseteq Z$, then $G$ must fix each $V_i$:  
For each $1 \leq i \leq m$ and $v \in V_i$, the map $\Theta_{v} : G \to Z $ given by $\Theta_{v}(g) = g \cdot v$ is continuous, so the set $\Theta_{v}^{-1} (V_i) = \{ g \in G \ | \ g \cdot v \in V_i \}$  is closed in $G$.   Similarly, if $\Theta^{v} : G \to \field{A}^n$ is given by $\Theta^{v}(g) = g^{-1} \cdot v$, then the set $(\Theta^{v})^{-1}(V_i)$ is also closed.  Thus, the sets $\bigcap \limits_{v \in V_1} \Theta_{v}^{-1} (V_i) = \{ g \in G \ | \ g V_i \subseteq V_i \}$ and $\bigcap \limits_{v \in V_1} (\Theta^{v})^{-1} (V_i) = \{ g \in G \ | \ V_i \subseteq g \cdot V_i \}$  are closed in $G$, and so their intersection, $\{ g \in V_i \ | \ g \cdot V_i = V_i \} = \stab_G V_i$ (the stabilizer of $V_i$ in $G$) is also closed in $G$.  

As $G$ permutes the $V_i$, we have a map $\phi : G \to S_m$, $S_m$ the symmetric group on $m$ letters. Since $\phi^{-1}(\stab_G (V_i)) = \stab_{S_m} (i)$, $\phi$ induces $G/\stab_G (V_i) \inj S_m/ \stab_{S_m} (i)$. As  $S_m/ \stab_{S_m} (i)$ is finite, so is $G/\stab_G (V_ i)$, and each $\stab_G (V_i)$ is a finite index subgroup of $G$.  If $\stab_G (V_i) \subsetneq G$, then since $G$ is closed, its cosets would disconnect $G$, which is impossible.  Thus, each $\stab_G (V_i) = G$, and $G$ fixes each $V_i$.
\end{rem}

\begin{lem} \label{stableprimes}
Let $G$ be a connected linear algebraic group, let $R$ be a rational $G$-module, and let $M$ be an $R[G]$-module such that $\Ass{R}{M}$ is finite.  Then every associated prime of $M$ is stable under the action of $G$. 
\end{lem}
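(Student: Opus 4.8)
The plan is to show that $G$ acts on the finite set $\Ass{R}{M}$ through an algebraic action, and then to conclude by the topological principle recorded in Remark \ref{groupactionclosed}: a connected group that permutes a finite collection of closed sets must fix each of them.

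First I would verify that $G$ permutes $\Ass{R}{M}$. If $\p = \Ann_R(u)$ for some $u \in M$ and $g \in G$, then the $R[G]$-module identity $g^{-1}(rm) = (g^{-1}r)(g^{-1}m)$ gives $r(gu) = g\bigl((g^{-1}r)u\bigr)$, so that $r \in \Ann_R(gu)$ if and only if $g^{-1}r \in \p$, i.e. $\Ann_R(gu) = g\p := \{gr : r \in \p\}$. Since $G$ acts on $R$ by $k$-algebra automorphisms, $g\p$ is again a prime ideal, and it annihilates $gu \in M$, so $g\p \in \Ass{R}{M}$. Writing $\Ass{R}{M} = \{\p_1, \ldots, \p_m\}$, this produces a group homomorphism $\rho : G \to S_m$ with $g\p_i = \p_{\rho(g)(i)}$.

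Next, for fixed indices $i$ and $j$ I would show that $T_{ij} := \{g \in G : g\p_i = \p_j\}$ is Zariski-closed in $G$. Write $T_{ij} = \{g : g\p_i \subseteq \p_j\} \cap \{g : g^{-1}\p_j \subseteq \p_i\}$. For any ideal $\mathfrak{b}$ of $R$ one has $\{g : g\p_i \subseteq \mathfrak{b}\} = \bigcap_{r \in \p_i}\{g : gr \in \mathfrak{b}\}$, and likewise with $g$ replaced by $g^{-1}$, so it suffices to see that for each fixed $r \in R$ the sets $\{g : gr \in \mathfrak{b}\}$ and $\{g : g^{-1}r \in \mathfrak{b}\}$ are closed. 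Since $R$ is a rational $G$-module, $r$ lies in a finite-dimensional $G$-stable subspace $R' \subseteq R$ on which $G$ acts regularly; then $g \mapsto gr$ and $g \mapsto g^{-1}r$ are morphisms $G \to R'$ (using that inversion on $G$ is regular), while $\mathfrak{b} \cap R'$ is a linear, hence closed, subspace of $R'$, so their preimages are closed. This is precisely the kind of argument used in Remark \ref{groupactionclosed}, and one could instead apply that remark directly with $Z = \operatorname{Spec} R$ and the closed sets $V(\p_i)$, recovering $g\p_i = \p_i$ from $g\cdot V(\p_i) = V(\p_i)$ since $\p_i = \bigcap_{\q \supseteq \p_i}\q$.

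Finally, for fixed $i$ the sets $T_{i1}, \ldots, T_{im}$ are pairwise disjoint (the $\p_j$ are distinct) and cover $G$ (each $g\p_i$ lies in $\Ass{R}{M}$), so they partition $G$ into finitely many closed, hence also open, subsets. As $G$ is connected and $T_{ii}$ contains the identity, $T_{ii} = G$, i.e. $g\p_i = \p_i$ for all $g \in G$; since $i$ was arbitrary, this proves the lemma. The only substantive point is the closedness claim in the third step --- namely, extracting from the rational $G$-module structure of $R$ that the orbit maps $g \mapsto gr$ are morphisms of affine varieties into a finite-dimensional space --- while everything else is formal, with connectedness of $G$ used exactly once, in the very last step.
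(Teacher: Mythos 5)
Your proof is correct and follows essentially the same route as the paper: establish that $G$ permutes the finite set of associated primes via $g \cdot \Ann_R(u) = \Ann_R(g \cdot u)$, then use connectedness of $G$ together with closedness of the stabilizer sets to conclude that each prime is fixed. The paper simply passes to the closed sets $V(\p_i)$ and cites Remark \ref{groupactionclosed} wholesale, whereas you unwind that remark's content (and in fact make explicit the point, left implicit in the remark's continuity hypothesis, that the rational $G$-module structure on $R$ is what makes the orbit maps $g \mapsto gr$ into morphisms into a finite-dimensional space); this is a useful clarification but not a different argument.
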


\begin{proof}
It is easy to check that if $u \in M$ and $\p = Ann_R u$,  then $g\cdot \p = Ann_R (g \cdot u)$.  Hence,  $G$ permutes the finite set of associated primes $\p_i$, and, consequently, the closed sets $\field{V}(\p_i)$.  The result now follows from Remark \ref{groupactionclosed}.
\end{proof}

\begin{hypothesis} \label{hypminors}
Let $k$ be a field of characteristic zero, let $X$ be an $r \times s$ matrix of indeterminates, where $r < s$, and let $R = k[X]$ be the polynomial ring over $k$ in the entries of $X$.  For $0 < t \leq r$, let $\I{t}{X}$ be the ideal of $R$ generated by the $t \times t$ minors of $X$, which is prime by \emph{\cite[Theorem 1]{Hoch-Eag}}.  Furthermore, let $I = \I{r}{X}$ be the ideal generated by the maximal minors of $X$.
\end{hypothesis}

\begin{rem}[Square matrix case]
Suppose that $R$ satisfies Hypothesis \ref{hypminors}, but assume instead that $r = s$.  Here, $I = \left( \Delta \right)$, where $\Delta$ is the determinant of $X$, and the only nonzero local cohomology module is $\LC{1}{I}{R}$, which is isomorphic to $R_\Delta/R.$  
\end{rem}

\begin{rem}[Action of the special linear group on our polynomial ring] \label{SLaction}
Let $k$, $R$, and $I$ satisfy Hypothesis \ref{hypminors}, and let $G = SL_r(k)$, which is linearly reductive since the characteristic of $k$ is zero.  Considering $\Gamma \in G$ as an $r \times r$ matrix, the action of $\Gamma$ on the $k$-algebra $R$ is defined by where the entries of $X$ are sent.  The  $(\alpha, \beta)^{\text{th}}$ entry of $X$ is sent to the $(\alpha, \beta)^{\text{th}}$ entry of $\Gamma^{-1} \cdot X$.  Thus, $G$ acts by $k$-algebra automorphisms that correspond to invertible row operations on the matrix $X$.  Additionally:

\begin{enumerate}
\item The maximal minors of $X$ are fixed by the action of $G$, so the ideal $I$ generated by them is $G$-stable.  A classical invariant theory result of Weyl states that, in fact, $R^G$ is the $k$-subalgebra of $R$ generated over $k$ by the maximal minors of $X$  \cite[Theorem 2.6.A]{Weyl}.  Thus,  if $\m_{R^G}$ is the homogeneous maximal ideal of $R^G$, $I = \m_{R^G} R$. 
\item In fact, $R^G$ is the homogeneous coordinate ring of the Pl\"ucker embedding of the Grassmann variety of $r$-planes in $s$-space, which has dimension $r(s-r)$; therefore, $ \dim R^G = r(s-r)+1.$
\end{enumerate} 
\end{rem}

\begin{rem}
Under Hypothesis \ref{hypminors}, $R$ is a $\ratRGmod$:  Since the action of $G$ is induced by that on the linear forms, $R$ is certainly an $R[G]$-module.  Moreover, $R$ will be the directed union of $V_n := \bigoplus \limits_{i\leq n} R_i$, each a finite-dimensional $G$-module.
\end{rem}

\begin{claim} \label{simple}
Suppose that $k$, $R$, and $I$ satisfy \textup{Hypothesis \ref{hypminors}}, and let $d = r(s-r)+1$.  Then $\dim_k \Soc \LC{d}{I}{R} = 1$.
\end{claim}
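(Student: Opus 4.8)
The plan is to apply Main Theorem \ref{main} to the situation of Hypothesis \ref{hypminors} with $G = SL_r(k)$ and $A = R^G$, so that $d = \dim A = r(s-r)+1$ by Remark \ref{SLaction}(2), and then extract the socle dimension from part (2) of that theorem. The point is that Main Theorem \ref{main}(2) already tells us that $V := \Soc \LC{d}{I}{R}$ is a \emph{simple} $G$-module; what remains is to identify which simple $SL_r(k)$-module it is and to check that this particular one is one-dimensional. First I would verify that the hypotheses of Main Theorem \ref{main} are met: $R$ is a polynomial ring over a field of characteristic zero, $G = SL_r(k)$ is linearly reductive, it acts by degree-preserving $k$-automorphisms (row operations, per Remark \ref{SLaction}), $R$ is a rational $G$-module, $A = R^G$ is the Pl\"ucker coordinate ring with homogeneous maximal ideal $\m_A$, and $I = \m_A R$. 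I also need the ``supported only at $\m$'' hypothesis: after inverting any single entry $x_{\alpha\beta}$ of $X$, the maximal minors of $X$ generate $I R_{x_{\alpha\beta}}$ up to radical by fewer than $d$ elements — this is a standard fact (localizing at an entry lets one do row/column reductions to bring $X$ to a form with fewer ``essential'' minors), which the paper presumably establishes or cites, and it forces $\LC{d}{I}{R}$ to be supported only at $\m$.

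Granting all this, Main Theorem \ref{main}(3) gives $\LC{d}{I}{R} \cong E_R(k) \tensor_k V$ as rational $G$-modules, and by Lemma \ref{GactionLC} the invariant part is
\[
\bigl(\LC{d}{I}{R}\bigr)^G \cong \LC{d}{\m_A}{A},
\]
which is the top local cohomology of the $d$-dimensional Cohen–Macaulay domain $A$ (Cohen–Macaulayness by Hochster–Roberts / Boutot as in the proof of Main Theorem \ref{main}), hence nonzero. Combining with Remark \ref{dualinvcommute} and Lemma \ref{tensorinvariants}: $(E_R(k) \tensor_k V)^G \cong (\dual{R} \tensor_k V)^G$ is nonzero exactly when $\findual{V} \inj \dual{R}$, equivalently when the simple module $\findual{V}$ (equivalently $V$, up to the involution $V \mapsto \findual V$) occurs in $R = k[X]$. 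So the task reduces to pinning down $V$ among the simple constituents of $k[X]$ and computing its dimension. The natural guess — matching Walther's $2\times 3$ computation where the answer is exactly $E_R(k)$, i.e. $V = k$ — is that $V$ is the \emph{trivial} one-dimensional $SL_r$-module; I would confirm this by identifying $\Soc \LC{d}{I}{R}$ with (the dual of) the socle of $\LC{d}{\m_A}{A}$ suitably twisted, using that $\LC{d}{\m_A}{A} \cong \Hom_A(\omega_A, E_A(k))$ and that $\omega_A$ for the Pl\"ucker/Grassmannian coordinate ring is a known twist of $A$ itself (the Grassmannian is arithmetically Gorenstein up to a shift), so $\Soc \LC{d}{\m_A}{A}$ is one-dimensional over $k$, and tracking how $G$ acts on that one-dimensional space — it must be via a character of $SL_r$, and $SL_r$ has no nontrivial characters, so the action is trivial.

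The main obstacle I anticipate is this last identification step: showing that the simple $G$-module $V = \Soc \LC{d}{I}{R}$ is actually the trivial module (equivalently, that $\dim_k V = 1$) rather than merely \emph{some} simple module. Simplicity alone does not bound the dimension for $SL_r$ when $r \geq 2$, so I genuinely need an extra input, and the cleanest one is the Gorenstein-up-to-shift property of the Pl\"ucker coordinate ring $A$ together with the observation that $SL_r(k)$ is its own commutator (or: has trivial character group), which rules out any one-dimensional module other than the trivial one. An alternative, more self-contained route avoiding explicit knowledge of $\omega_A$: show directly that $(\dual{R} \tensor_k V)^G \neq 0$ \emph{and} that the $\findual{V}$-isotypical component of $\dual R$ meets $\mathfrak m_A$-torsion considerations in a controlled way, but I expect invoking the known Gorenstein property of the Grassmannian coordinate ring to be by far the shortest path, and I would structure the argument around it, filling in the character-triviality of $SL_r$ as the final line.
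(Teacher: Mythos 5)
Your setup matches the paper's: you correctly invoke Main Theorem \ref{main}(2) to know $V := \Soc \LC{d}{I}{R}$ is a simple $SL_r(k)$-module, and you correctly flag Gorensteinness of $A = R^G$ as relevant. But the final step you propose does not close the argument. You want to deduce that $G$ acts trivially on $V$ by noting that $\Soc_A \LC{d}{\m_A}{A}$ is one-dimensional and that $SL_r$ has no nontrivial characters. This fails for two reasons. First, the character observation is vacuous in this position: $\LC{d}{\m_A}{A} \cong (\LC{d}{I}{R})^G$ by Lemma \ref{GactionLC}, so $G$ acts trivially on all of it --- including its socle --- simply because you have taken $G$-invariants; no structure theory of $SL_r$ is being used. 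Second, and more seriously, there is no supplied identification between the one-dimensional $A$-socle of $\LC{d}{\m_A}{A}$ and the $R$-socle $V$ of $\LC{d}{I}{R}$. These are socles over different rings with respect to different maximal ideals, and since $\m_A R = I \subsetneq \m$, being annihilated by $\m_A$ is strictly weaker than being annihilated by $\m$. The phrase ``identifying $\Soc \LC{d}{I}{R}$ with (the dual of) the socle of $\LC{d}{\m_A}{A}$ suitably twisted'' is, in effect, a restatement of the claim $\dim_k V = 1$, not a proof of it. Your closing remark that triviality of the character group ``rules out any one-dimensional module other than the trivial one'' also runs the wrong direction: it shows a one-dimensional $V$ would be trivial, not that $V$ is one-dimensional.

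The actual decisive input, which your proposal omits, is a representation-theoretic fact specific to $G = SL_r(k)$ acting on $R = k[X]$. The paper combines Key Lemma \ref{hardlemma} (giving $\dual{\left(\LC{d}{I}{R}\right)} \cong R \tensor_k \findual{V}$ as rational $G$-modules), Gorensteinness of $A$, Lemma \ref{GactionLC}, and Remark \ref{dualinvcommute} to obtain $R^G \cong \invpt{ R \tensor_k \findual{V} }$ as graded $G$-modules up to shift. Decomposing $R$ into $V$-isotypical pieces $W_j \cong V^{\oplus n_j}$ and applying Lemma \ref{tensorinvariants}, comparison with $[R^G]_0 = k$ forces $n_\mu = 1$, where $\mu$ is the least degree of $R$ in which $V$ occurs. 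The key step is then the fact --- cited from \cite[Theorem 5.2.7]{Goodman} --- that every nontrivial simple $SL_r(k)$-submodule of $k[X]$ occurs with multiplicity strictly greater than one in the smallest degree in which it appears. Hence $V$ must be the trivial module and therefore one-dimensional. Without a multiplicity statement of this kind, simplicity of $V$ together with the one-dimensionality of $\Soc_A \LC{d}{\m_A}{A}$ does not constrain $\dim_k V$.
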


\begin{proof}
Let $G = SL_r(k)$ act on $R$ as in Remark \ref{SLaction}.  Let $\m_{R^G}$ be the homogeneous maximal ideal, and let $V = \Soc \LC{d}{I}{R}$.  Recall that $\dim R^G = d$. Thus, as $G$ is a connected semisimple group over a field of characteristic zero, and preserves degrees in its action on $R$, $R^G$ is Gorenstein by the theorem of Hochster and J. Roberts \cite[Corollary 1.9]{H-R}.  Therefore, its canonical module is isomorphic to $R^G$, so by Matlis duality (see Remark \ref{MatlisDuality}) $\dual{\left(\LC{d}{\m_{R^G}}{R^G}\right)} \cong R^G$, with a possible shift in grading.  Moreover, by taking $M = \LC{d}{I}{R}$ in Key Lemma \ref{hardlemma} and applying graded duals, $\dual{\left( \LC{d}{I}{R} \right)} \cong R \tensor_k \findual{V}$ as $\ratGmod$s.  As $G$-modules,  

\begin{align}  
R^G &\cong \dual{ \left(  \LC{d}{\m_{R^G}}{R^G} \right) } \notag \\
& \cong \dual{\left( \invpt{\LC{d}{I}{R}} \right) } \label{lemmaLC} \\
& \cong \invpt{ \dual{\left( \LC{d}{I}{R} \right) } } \label{linred} \\
&\cong \invpt{ R \tensor_k \findual{V}}, \notag
\end{align} 
where \eqref{lemmaLC} holds by Lemma \ref{GactionLC} and \eqref{linred} since $G$ is linearly reductive (see Remark \ref{dualinvcommute}).

By Main Theorem \ref{main} (2), $V = \Soc \LC{d}{I}{R}$ is a simple $G$-module. Let $W$ denote the $V$-isotypical component of $R$ (see Definition \ref{isotyp}).  Note that since $V$ is a simple $G$-module and $G$ respects the grading on $R$, each submodule of $R$ isomorphic to $V$ sits in one degree.  Then, $W$ a $G$-submodule of $R$, has a natural induced grading; each graded piece, $W_j \subseteq R_j$, is isomorphic as a $G$-module to a finite direct sum of copies of $V$, say $W \cong V^{\oplus{n_j}}$.  We have the following $G$-module isomorphisms (where \eqref{isotypic} is due to Lemma \ref{tensorinvariants}.
\begin{align} R^G &\cong \invpt{ \findual{V} \tensor_k R } \notag \\ \label{isotypic} &\cong \invpt{ \findual{V} \tensor_k \left( \bigoplus_j  V^{\oplus n_j} \right) } \\ \notag &\cong \bigoplus_j \left( \invpt{ \findual{V} \tensor_k  V } \right)^{\oplus n_j} \end{align}

\noindent It is easily checked that, up to a shift, grading is preserved under these maps. If $\mu$ denotes the smallest degree for which $\left( \invpt{ \findual{V} \tensor_k  V } \right)^{\oplus n_\mu} \neq 0$, then $\mu$ is the smallest degree for which $V$ injects as a $G$-module into $R_\mu$.  Since $\dim_k \left[R^G\right]_0 = 1$, $\dim_k\left( \invpt{ \findual{V} \tensor_k  V } \right)^{\oplus n_\mu} = 1$, and we must have that $n_\mu = 1$. 
 

With our specific choice of $G$ and $R$, if any simple $G$-module with a nontrivial action is a $G$-submodule of $R$, it occurs with multiplicity greater than one in the smallest degree of $R$ in which it occurs.  
This is, for example, a consequence of \cite[Theorem 5.2.7]{Goodman}.  
Since $n_\mu =1$, we know that $G$ must act trivially on $V$.  
Since $V$ is a simple $G$-module, we have that $\dim_k V = 1$.
\end{proof}

\begin{rem}[Another useful group action on our polynomial ring] \label{SLSLaction}
Let $k$ and $R$ satisfy Hypothesis \ref{hypminors}, and let $H$ be the connected group $SL_r(k) \times SL_s(k)$  \cite[Theorem 2.19]{Goodman}.  Then $H$ acts on $R$ by $k$-algebra automorphisms as follows:  Considering $\Gamma \in SL_r(k)$ and $\Gamma^\prime \in SL_s(k)$ as $r \times r$ and $s \times s$ matrices, respectively, the action of $\Gamma \times \Gamma'$ sends the entries of $X$ to those of $\Gamma^{-1} X \Gamma^{\prime}$.  The action of $H$ is clearly transitive on the entries of the matrix.
\end{rem}

The following observation is used in the proof of the Main Theorem on Minors \ref{LC}.

\begin{rem} \label{localizeisom}
Let $k$, $R$, $I$, and $\I{t}{X}$ satisfy Hypothesis \ref{hypminors}.  Over $R_{x_{1 1}},$ we can perform elementary row and column operations on $X$ to obtain the matrix  \[ \begin{bmatrix}
1 &\vline& 0 & \ldots & 0\\ 
\hline 0 & \vline &  &\\ 
\vdots & \vline &  & Y\\
0 & \vline & &\\
\end{bmatrix}, \]  where $Y$ is the $( r-1 ) \times ( s -1 )$ matrix $\left[x_{\alpha \beta} - \frac{x_{1 \beta}}{x_{1 1}}{x_{\alpha 1}} \right]_{\substack{1 < \alpha \leq r \\ 1 < \beta \leq s}}.$

Let $S = k\left[ y_{\alpha \beta}, x_{1 1}, x_{1 1}^{-1}, x_{\alpha 1}, x_{1 \beta} \ | \ 1 < \alpha \leq r, 1 < \beta \leq s \right].$  Since the operations used to transform the matrix are invertible, the transformation defines an isomorphism \begin{align}
  S &\cong R_{x_{1 1}}, \text{ where} \label{mxisom} \\
  y_{\alpha \beta} & \mapsto x_{\alpha \beta} - \frac{x_{1 \beta}}{x_{\alpha 1}}{x_{1 1}}. \notag
\end{align}  Under \eqref{mxisom}, the ideal $\I{t+1}{X}$ of $R_{x_{1 1}}$ corresponds precisely to the ideal $\I{t}{Y}$ of $S$.  Thus, this map induces an isomorphism $\left(\LC{i}{\I{r}{X}}{R}\right)_{x_{1 1}} \cong \LC{i}{\I{r-1}{Y}}{S}$ for any $i$.  Since $ \LC{i}{\I{r-1}{Y}}{S} \cong \LC{i}{\I{r-1}{Y}}{k[Y] \tensor_{k[Y]} S}$, which is, in turn, isomorphic to $ \LC{i}{\I{r-1}{Y}}{k[Y]} \tensor_{k[Y]} S$ since $S$ is flat over $k[Y]$, we have that \begin{align} \label{RSLC} \left(\LC{i}{\I{r}{X}}{R}\right)_{x_{1 1}} &\cong \LC{i}{\I{r-1}{Y}}{k[Y]} \tensor_{k[Y]} S.\end{align}
\end{rem}

\begin{mainminors} \label{LC} 
Suppose that $k$, $R$, $I$, and $\I{t}{X}$ satisfy \textup{Hypothesis \ref{hypminors}}.

\begin{enumerate}
\item[\textup{(1)}]  For $d=r(s-r)+1$,  $\LC{d}{I}{R} \cong E_R(k).$ 
\item[\textup{(2)}] $\LC{i}{I}{R} \neq 0$ if and only if $i=(r-t)(s-r) + 1$ for some $0 \leq t < r.$
\item[\textup{(3)}] Furthermore, if $i= (r-t)(s-r)+1$, then \[\LC{i}{I}{R} \hookrightarrow E_R(R/ \I{t+1}{X}) \cong \LC{i}{I}{R}_{\I{t+1}{X}}.\] \noindent In particular, $\Ass{R}{\LC{i}{I}{R}} = \{ \I{t+1}{X} \}.$\end{enumerate}
\end{mainminors}

\begin{proof}
First consider $d = r(s-r)+1$, the dimension of the invariant ring $R^G$ under the action of $G$ from Remark \ref{SLaction}.  By Main Theorem \ref{main}, $\LC{i}{I}{R} = 0$ for any $i>d$.  Applying this again to the smaller matrix $Y$ from Remark \ref{localizeisom}, we see that $\LC{i}{\I{r-1}{Y}}{k[Y]} = 0$ if $i>(r-1)\left( (s-1) - (r-1)\right) + 1$; in particular,  $\LC{d}{\I{r-1}{Y}}{k[Y]} = 0$.  Therefore, \eqref{RSLC} indicates that $\left(\LC{d}{\I{r}{X}}{R}\right)_{x_{1 1}} = 0.$  By symmetry, $\LC{d}{\I{r}{X}}{R}$ vanishes after localizing at any $x_{\alpha \beta}$, and so $\LC{d}{\I{r}{X}}{R}$ is supported only at the homogeneous maximal ideal $\m$ of $R$.

Thus, by Theorem \ref{Lyubeznik}, $\LC{d}{I}{R} \cong E_R(k)^{\oplus \alpha}$ for some  integer $\alpha$.  As $\Ann_{E_R(k)}{\m} = k$, $\Soc \LC{d}{I}{R} = \Ann_{\LC{d}{I}{R}} \m$ is a $k$-vector space of dimension $\alpha$.  By Proposition \ref{simple}, $\alpha = 1$, proving (1).

We now use induction on $r$, for all $s \geq r$, to prove that if $ i=(r-t)(s-r) + 1$ for some $0 \leq t < r$, $X$ is an $r \times s$ matrix of indeterminates, $k$ is a field of characteristic zero, and $R = k[X]$, then  \begin{align}
\Ass{R}{\LC{i}{I}{R}} = \{ \I{t+1}{X} \}, \text{ and} \label{assprove}  \\
\LC{i}{I}{R}_{\I{t+1}{X}} \cong E_R(R/ \I{t+1}{X}) \label{locprove},
 \end{align} and for $i$ not of this form, $\LC{i}{I}{R}$ vanishes.  (This would prove (2) and (3).)

For the base case, let $r =1$.  Here, $R = k[ x_1, \ldots, x_s]$; if $t=0$, then $i = r(s-r)+1 = s$.  Since $I = \I{1}{X}$ is the homogeneous maximal ideal of $R$, $\LC{s}{ I }{R} \cong E_R(k)$, and $\LC{i}{I}{R}=0$ for all $i \neq s$.

Now say that for all $r_0 < r$ and $s_0 \geq r_0,$ for any $0 \leq t_0 < r_0$, if $R = k[X]$, where $k$ is a field of characteristic zero and $X = [x_{\alpha \beta}]$ is an $r_0 \times s_0$ matrix of indeterminates, and $i = (r_0-t_0)(s_0-r_0)+1,$ then we have that $\Ass{R}{ \LC{i}{\I{r_0}{X}}{R}} = \{ \I{t_0+1}{X} \}$ and $\left(\LC{i}{\I{r_0}{X}}{R}\right)_{\I{t_0+1}{X}} \cong E_R\left(R/\I{t_0+1}{X}\right)$. Assume, moreover, that for all $i$ not of this form, $\LC{i}{\I{r_0}{X}}{R}$ vanishes.

Take $X$ an $r \times s$ matrix of indeterminates, $R = k[X]$, and $I = \I{r}{X}$.  In proving (1), we have already shown \eqref{assprove} and \eqref{locprove} for $i = d = r(s-r)+1$.    For $i< d$, $\m$ is not an associated prime of $\LC{i}{I}{R}$ by Main Theorem \ref{general} (1), so some $x_{\alpha \beta}$ must be a nonzerodivisor on $\LC{i}{I}{R}$.  We could renumber the indeterminates to assume that $x_{11}$ is nonzerodivisor, but, in fact, each $x_{\alpha \beta}$ is a nonzerodivisor on $\LC{i}{I}{R}$:  Consider the action of the group $H$ described in Remark \ref{SLSLaction}.  Since $H$ is connected and $\Ass{R}{ \LC{i}{I}{R} }$ is finite by Theorem \ref{Lyubeznik} (due to the grading on $R$, each associated prime is contained in $\m$), Lemma \ref{stableprimes} implies that each associated prime of $\LC{i}{I}{R}$ is stable under its action.  Since every indeterminate $x_{\alpha \beta}$ is in the orbit of every other indeterminate, because some $x_{\alpha \beta}$ is a nonzerodivisor, every one is a nonzerodivisor.

By the inductive hypothesis, all $\LC{i}{\I{r-1}{Y}}{k[Y]} = 0$ unless $0 \leq t_0 < r-1$ and \[ i = \left((r-1)-t_0)((s-1)-(r-1)\right)+1 = (r-1-t_0)(s-r)+1,\] or, equivalently, $i  = (r-t)(s-r)+1$ with $1 \leq t < r$.  Since each such $i$ is less than $d$, $x_{11}$ is a nonzerodivisor on $\LC{i}{I}{R}$, and \eqref{RSLC} implies that the same vanishing conditions must hold for the $\LC{i}{\I{r}{X}}{R}$.  Combining this fact with (1), we see that  $\LC{i}{\I{r}{X}}{R}$ must vanish for $i < d$ unless $i = (r-t)(s-r)+1$ for some $0 \leq t < r.$

Suppose that $i = (r-t)(s-r) + 1$ for some $t > 0.$  The inductive hypothesis tells us that $\Ass{k[Y]}{\LC{i}{\I{r-1}{Y}}{k[Y]}} = \{ \I{t}{Y} \},$ and since $S$ is flat over $k[Y]$ \cite[Theorem 12]{Mat}, $\Ass{S}{\LC{i}{\I{r-1}{Y}}{k[Y]} \tensor_{k[Y]} S} = \{ \I{t}{Y}S \}.$  Thus, \eqref{RSLC} implies that $\Ass{R}{\LC{i}{\I{r}{X}}{R}_{x_{11}}}$ consists solely of $\I{t+1}{X}$, the ideal that corresponds to $\I{t}{Y}S$ under \eqref{mxisom}.   Since $x_{11}$ is a nonzerodivisor on $\LC{i}{I}{R}$, the associated primes of $\left(\LC{i}{\I{r}{X}}{R}\right)_{x_{1 1}}$ are those of $\LC{i}{\I{r}{X}}{R}$ expanded to $R_{x_{11}}$, and $\Ass{R}{\LC{i}{\I{r}{X}}{R}} = \{ \I{t+1}{X} \},$ proving \eqref{assprove}.

Hochster and Eagon found that $\height_{R} I_t(X) = (r-t+1)(s-t+1)$, so $\height_{k[Y]} \I{t}{Y} = (r-t)(s-t) = \height_R \I{t+1}{X}$ \cite[Theorem 1]{Hoch-Eag}.  Therefore, noting that $x_{11} \notin \I{t+1}{X}$  for any $1 \leq t < r$, the following sequence of isomorphisms proves \eqref{locprove}:
 \begin{align} 
\label{isom1} \left(\LC{i}{\I{r}{X}}{R}\right)_{\I{t+1}{X}} &\cong \left(\LC{i}{\I{r-1}{Y}}{S} \right)_{\I{t}{Y}S} \\ 
\label{isom2}  &\cong \left(\LC{i}{\I{r-1}{Y}}{k[Y]}\right)_{\I{t}{Y}} \tensor_{k[Y]} S  \\ 
\label{isom3} &\cong E_{k[Y]}\left(k[Y]/\I{t}{Y}\right) \tensor_{k[Y]} S \\  
\label{isom5} &\cong \left( \LC{\height \I{t}{Y}}{\I{t}{Y} }{k[Y]} \right)_{\I{t}{Y}} \tensor_{k[Y]} S  \\ 
\label{isom6}  &\cong\LC{\height \I{t}{Y}}{\I{t}{Y} }{k[Y] \tensor_{k[Y]} S }_{\I{t}{Y}}  \\ 
\notag &\cong \LC{\height \I{t}{Y}}{\I{t}{Y}}{S}_{\I{t}{Y}}  \\ 
\notag &\cong \LC{\height \I{t+1}{X}}{\I{t}{Y} S_{\I{t}{Y}}}{S_{\I{t}{Y}}}  \\ 
\label{isom9} &\cong \LC{\height \I{t+1}{X}}{\I{t+1}{X}R_{\I{t+1}{X}} }{R_{\I{t+1}{X}}} \\
\label{isom10} &\cong E_R\left(R/\I{t+1}{X}\right). 
\end{align}  
\noindent  \eqref{isom1} and \eqref{isom9} are induced by \eqref{mxisom}, \eqref{isom2} and \eqref{isom6} occur because $S$ is flat over $k[Y]$, \eqref{isom3} is by the inductive hypothesis, and since $R$ is Gorenstein, we have \eqref{isom5} and \eqref{isom10}.  \end{proof}


\section*{Acknowledgements}

This results of this article are part of the author's thesis completed at the University of Michigan.  She sincerely thanks her advisor, Mel Hochster, for sharing invaluable ideas, suggestions, and advice.  She would also like to thank Daniel Hern\'andez for useful mathematical discussions regarding this work, and for his support.


\nocite{B-V, Bjork}

\bibliographystyle{alpha}
\bibliography{References}

\vspace{.2cm}

\begin{center}
\noindent \small \textsc{Department of Mathematics, University of Minnesota, Minneapolis, MN  55455} \\ \emph{Email address}:  \href{mailto:ewitt@umn.edu}{\tt ewitt@umn.edu} 
\end{center}

\end{document}